\newcommand{\DecRib}{\mathsf{LRib}}
\newcommand{\DR}{\DecRib}
\newcommand{\mynewpage}{}
\newcommand{\Para}{\mathsf{Para}}
\newcommand{\Rib}{\mathsf{Rib}}
\newcommand{\MRib}{\mathsf{LRib}}
\newcommand{\Ribbon}{\Rib}
\newcommand{\MPara}{\mathsf{LPara}}
\newcommand{\DecPara}{\mathsf{LPara}}
\newcommand{\Rec}{\mathsf{Rec}}
\newcommand{\Recmin}{\mathsf{Rec}^{\mathrm{min}}}
\newcommand{\Recmindec}{\mathsf{Rec}^{\mathrm{min,dec}}}
\newcommand{\Recdec}{\mathsf{Rec}^{\mathrm{dec}}}
\newcommand{\EW}{{\mathsf{EW}}}
\newcommand{\MEW}{{\mathsf{MEW}}}
\newcommand{\DEW}{\mathsf{DEW}}
\newcommand{\mycomp}[1]{\overline{#1}}
\newcommand{\rightangle}{cornersupport}
\newcommand{\bRmMr}{f_{\mathsf{RmMr}}}
\newcommand{\bEMr}{f_{\mathsf{EMr}}}
\newcommand{\bERm}{f_{\mathsf{ERm}}}
\newcommand{\mydecrib}{\phi}
\newcommand{\bounce}{\mathrm{bounce}}
\newcommand{\GETOUT}[1]{}
\newcommand{\magic}{h}
\DeclareMathAlphabet{\mathpzc}{OT1}{pzc}{m}{it}
\def\lgrey{0.9}
\def\softgrey{0.8}
\definecolor{lightgrey}{rgb}{\lgrey,\lgrey,\lgrey}
\definecolor{grey}{rgb}{\softgrey,\softgrey,\softgrey}
 \newtheorem{theorem}{Theorem}[section]
 \theoremstyle{definition}
 \newtheorem{example}[theorem]{Example}
 \newtheorem{proposition}[theorem]{Proposition}
 \newtheorem{definition}[theorem]{Definition}
 \newtheorem{remark}[theorem]{Remark}
 \theoremstyle{remark}
\definecolor{mygreen}{RGB}{80,160,80}
\newcommand\mps[1]{\marginpar[\raggedleft\tiny\sf\textcolor{mygreen}{#1}]{\tiny\sf\textcolor{blue}{#1}}}
\renewcommand\mps[1]{}
\newcommand{\EWdiagramlab}[4]{
  \begin{tikzpicture}
\def\zs{0^{\star}}
\def\os{1^{\star}}
\def\step{0.43}
\draw (0,0) node [anchor = north west]  {\young(#1)};
\foreach[count=\y] \lab in {#2}
	\draw (0.25,-\y*\step+0.05) node [anchor=east]{\tiny{$v_{\lab}$}};
\foreach[count=\x] \lab in {#3}
	\draw (\x*\step-0.05,0.2) node [anchor=north]{\tiny{$v_{\lab}$}};
\draw [left] (-.25,-1) node [anchor = east]{#4};
\end{tikzpicture}
}
\newcommand{\EWdiagramlabwithin}[4]{
\def\zs{0^{\star}}
\def\os{1^{\star}}
\def\step{0.43}
\draw (0,0) node [anchor = north west]  {\young(#1)};
\foreach[count=\y] \lab in {#2}
	\draw (0.25,-\y*\step+0.05) node [anchor=east]{\tiny{$v_{\lab}$}};
\foreach[count=\x] \lab in {#3}
	\draw (\x*\step-0.05,0.2) node [anchor=north]{\tiny{$v_{\lab}$}};
\draw [left] (-.25,-1) node [anchor = east]{#4};
}
\title[]{Parallelogram polyominoes and rectangular EW-tableaux: correspondences through the sandpile model}
\author[A. Alofi]{Amal Alofi}
\address{UCD School of Mathematics and Statistics, University College Dublin, Dublin 4, Ireland} \email{amal.alofi@ucdconnect.ie}
\author[M. Dukes]{Mark Dukes}
\address{UCD School of Mathematics and Statistics, University College Dublin, Dublin 4, Ireland} \email{mark.dukes@ccc.oxon.org}
\begin{document}

\begin{abstract}
This paper establishes connections between EW-tableaux and parallelogram polyominoes by using recent research regarding the sandpile model on the complete bipartite graph.
This paper presents and proves a direct bijection between rectangular EW-tableaux and labelled ribbon parallelogram polyominoes. 
The significance of this is that allows one to move between these objects without the need for `recurrent configurations', the central object which previously tied this work together.
It introduces the notion of a marked rectangular EW-tableaux that exactly encode all recurrent configurations of the sandpile model on the complete bipartite graph. 
This 
shows how non-cornersupport entries that featured in previous work can be utilized 
in a simple but important way in relation to EW-tableaux.
It lifts the bijection between rectangular EW-tableaux and labelled ribbon parallelogram polyominoes 
to a bijection between marked rectangular EW-tableaux and labelled parallelogram polyominoes. 
This bijection helps us to fully understand the aspects of these very different objects that are, in a sense, different sides of the same coin.
\end{abstract}

\maketitle

\section{Introduction}\label{sec:intro}
In a series of recent papers, several classes of combinatorial objects have been shown to be in one-to-one correspondence with recurrent configurations of the Abelian sandpile model (ASM) on different classes of graphs~\cite{aadhlb,aadlb,dlb,sss,dsss,dasm}.
These connections have shown a certain combinatorial richness emerges from studying the behaviour of Dhar's burning algorithm -- an algorithm that checks a configuration for recurrence -- on different graph classes.
In this paper we will study these correspondences with a slightly different goal in mind.
To explain our motivation it is necessary to recall several results from these papers that are diagrammatically summarised in Figure~\ref{fig:summary}.

A polyomino is a planar generalization of a domino that consists of unit cells having integral coordinates and is connected is some manner.
Dukes and Le Borgne~\cite{dlb} established a bijection from the set of all {\it{weakly decreasing}}
recurrent configurations of the ASM on the complete bipartite graph $K_{m,n}$, $\Recdec(K_{m,n};v_0)$,
to the set of all {\it{parallelogram}} polyominoes having a $m\times n$ bounding box, $\Para_{m,n}$.
They also showed how that correspondence could be lifted to one between all recurrent configurations, $\Rec(K_{m,n};v_0)$, and 
a set of labelled parallelogram polyominoes. 
Minimal recurrent configurations, $\Recmin(K_{m,n};v_0)$, were shown to correspond 
to those labelled parallelogram polyominoes that have minimal area, $\MRib_{m,n}$.

In Dukes et al.~\cite{dsss}, the authors gave a bijection between all recurrent configurations of the ASM on a Ferrers graph $G(F)$
and the set of all `decorated' EW-tableaux having shape $F$. 
EW-tableaux are certain 0/1 fillings of a Ferrers diagram that were first defined by Ehrenborg and van Willigenburg~\cite{evw}.
If the Ferrers diagram $F$ is a rectangular diagram, then the corresponding graph $G(F)$ is a complete bipartite graph.
It is in this setting that the results of \cite{dlb} can be compared to those of \cite{dsss} and we summarize these in Figure~\ref{fig:summary}.

The correspondences in Figure~\ref{fig:summary} have a central spine consisting of different types of recurrent configurations.
Dashed lines indicate containment within, or lifting to, a structure above.
On the left hand side of this figure the correspondences of the papers ~\cite{sss,dsss} is summarised.
On the right hand side the correspondences with different types of parallelogram polyominoes are illustrated.

The composition of bijections allows one, in theory, to map from a (type of) 0/1 filling of a rectangular tableaux to a (type of) parallelogram polyomino.
Our motivation is drawn from the following question:
is it possible to describe the composition of the two bijections without using recurrent configurations of the ASM?
As parallelogram polyominoes can be considered to be a type of 0/1 tableaux, wherein 1s represent filled cells and 0s unfilled cells, 
this paper studies the equivalence of {\it{marked EW-rectangular tableaux}} and {\it{labelled parallelogram polyominoes}}.

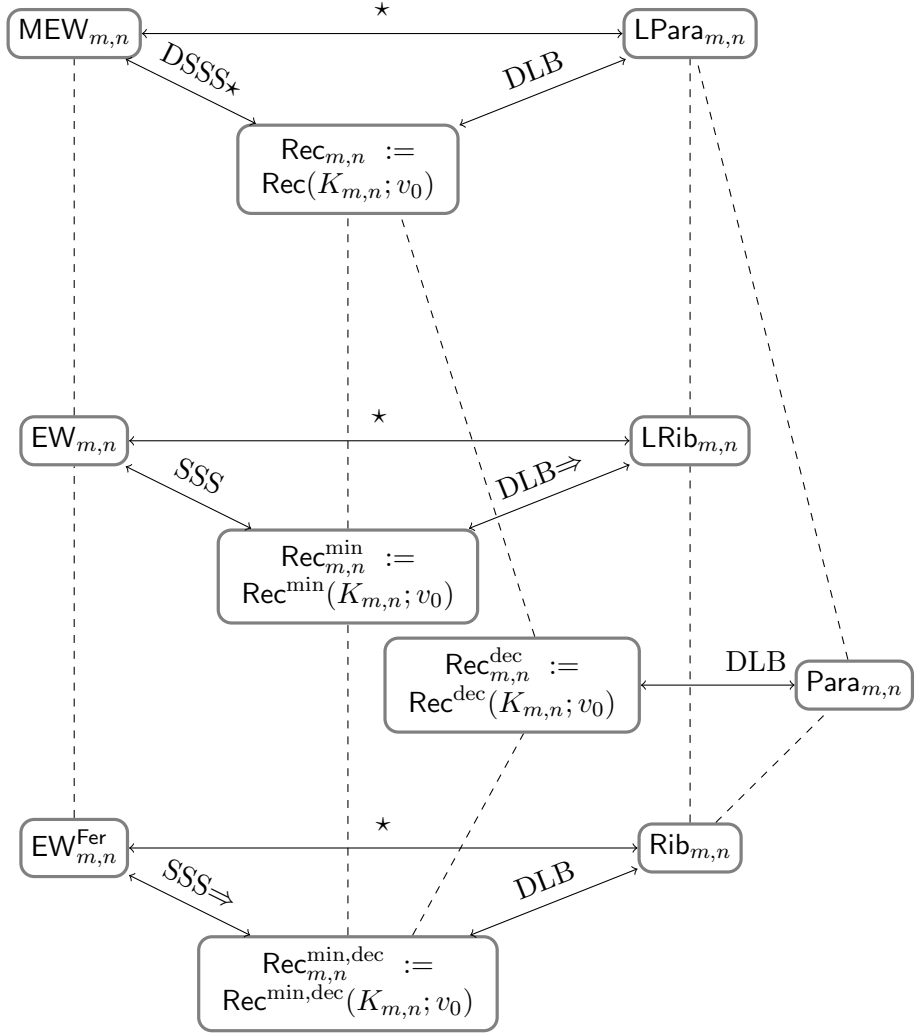
\begin{figure}[!h]
\begin{tikzpicture}[scale=0.45, 
	every node/.style={rectangle,rounded corners=5pt,minimum size=6mm,very thick,draw=black!50,fill=white} ]
\def\xs{120}\def\ys{100}
\def\x{4}\def\y{4}
\node (EWFER) at (-2*\x,1*\y) {$\EW^{\textsf{Fer}}_{m,n}$};
\node (RMD) at (0,0) {$\begin{array}{c} \Recmindec_{m,n}~:=\\ \Recmindec(K_{m,n};v_0)\end{array}$};
\node (RD) at (1.2*\x,2.2*\y) {$\begin{array}{c} \Recdec_{m,n}~:=\\ \Recdec(K_{m,n};v_0)\end{array}$};
\node (RM) at (0*\x,3*\y) {$\begin{array}{c} \Recmin_{m,n}~:=\\ \Recmin(K_{m,n};v_0)\end{array}$};
\node (EW) at (-2*\x,4*\y) {$\EW_{m,n}$};
\node (MEW) at (-2*\x,7*\y) {$\MEW_{m,n}$};
\node (RIB) at (2.5*\x,1*\y) {$\Rib_{m,n}$};
\node (MRIB) at (2.5*\x,4*\y) {$\MRib_{m,n}$};
\node (PARA) at (3.7*\x,2.2*\y) {$\Para_{m,n}$};
\node (R) at (0*\x,6*\y) {$\begin{array}{c} \Rec_{m,n}~:=\\ \Rec(K_{m,n};v_0)\end{array}$};
\node (MPARA) at (2.5*\x,7*\y) {$\MPara_{m,n}$};
\draw[dashed] (EWFER) -- (EW) -- (MEW);
\draw[dashed] (RMD) -- (RM) -- (R);
\draw[transform canvas={xshift=0.5cm},dashed] (RMD) -- (RD) -- (R);
\draw[dashed] (RIB) -- (MRIB) -- (MPARA);
\draw[dashed] (RIB) -- (PARA) -- (MPARA);
\draw[<->] (MEW) -- (MPARA) node[draw=none,fill=none,font=\normalsize,midway,above] {$\star$}; 
\draw[<->] (MEW) -- (R) node[draw=none,fill=none,font=\normalsize,midway,above,sloped] {DSSS$\star$}; 
\draw[<->] (R) -- (MPARA) node[draw=none,fill=none,font=\normalsize,midway,above,sloped] {DLB}; 
\draw[<->] (EW) -- (MRIB) node[draw=none,fill=none,font=\normalsize,midway,above] {$\star$};; 
\draw[<->] (EW) -- (RM) node[draw=none,fill=none,font=\normalsize,midway,above,sloped] {SSS};
\draw[<->] (RM) -- (MRIB) node[draw=none,fill=none,font=\normalsize,midway,above,sloped] {DLB$\Rightarrow$}; 
\draw[<->] (EWFER) -- (RIB) node[draw=none,fill=none,font=\normalsize,midway,above] {$\star$};; 
\draw[<->] (EWFER) -- (RMD) node[draw=none,fill=none,font=\normalsize,midway,above,sloped] {SSS$\Rightarrow$};
\draw[<->] (RMD) -- (RIB) node[draw=none,fill=none,font=\normalsize,midway,above,sloped] {DLB}; 
\draw[<->] (RD) -- (PARA) node[draw=none,fill=none,font=\normalsize,near end,above,sloped] {DLB}; 
\end{tikzpicture}
\caption{An overview of the relations between the sets studied in this paper. 
Arrows on both ends of a solid line indicate a bijection between the two objects. 
Known bijections are indicated with letters. Those with a $\star$ are the topic of this paper. 
Dashed lines indicate subset containment, or a lifting of a structure to a more general object, from bottom to top.
The attributions are DLB~\cite{dlb}, SSS~\cite{sss}, DSSS~\cite{dsss}.
The symbol $\Rightarrow$ indicates a bijection is implied by results in a paper.
}
\label{fig:summary}
\end{figure}

This paper is organised as follows.
In Section 2 we will give an overview of the combinatorial objects used in this paper.
In Section 3 we present and prove a bijection between rectangular EW-tableaux and the set of labelled ribbon parallelogram polyominoes.
In Section 4 we lift the bijection presented in Section 3 to a more general level by introducing the notion of marked EW-tableaux.
A direct bijection between this set and the set of all labelled parallelogram polyominoes is proven.

\section{Two combinatorial objects}
\label{sec:defs}
In this section we will recall some of the notation, concepts, and results from the papers mentioned in Section~\ref{sec:intro} that will be necessary for our work.

\subsection{Parallelogram polyominoes and their decorations}
Parallelogram polyominoes, also known as staircase polyominoes, are polyominoes that are contained within two staircase shapes that only touch at their endpoints.
More formally: a parallelogram polyomino with a $m\times n$ bounding box is defined by two lattice paths from (0,0) to $(m,n)$ that
take unit north and east steps and do not touch other than at the origin and $(m,n)$.
A parallelogram polyomino is called a {\it{ribbon parallelogram polyomino}} if it has minimal area, in the case of a $m\times n$ the minimal area is $m+n-1$.

\begin{example}\label{firstpp}
The following shaded shape is parallelogram polyomino with $(m,n)=(7,3)$:\\
    \begin{center}
    \def\oce{0.3}
        \def\mskip{1}
    \begin{tikzpicture}[scale=0.45]
    \newcommand{\mybox}[2]{\draw [fill=grey] ({#1},{#2}) rectangle (1+#1,1+#2);}
    \newcommand{\bleep}[4]{\draw [line width=1.5pt] (\mskip*#1,\mskip*#2) -- (\mskip*#3,\mskip*#4);}
    \newcommand{\vtxst}[3]{\draw (0.25+\mskip*{#1},0.5+\mskip*{#2}) node{#3};}
    \newcommand{\htxst}[3]{\draw (0.5+\mskip*{#1},0.3+\mskip*{#2}) node{#3};}
        \foreach \x in {0,1,2,...,7}
                \draw [color=grey] (0+\mskip*\x,0) -- (0+\mskip*\x,3*\mskip);
        \foreach \y in {0,1,2,...,3}
                \draw [color=grey] (0,\mskip*\y) -- (7*\mskip,\mskip*\y);
    \mybox{0}{0} \mybox{1}{0} \mybox{2}{0} \mybox{3}{0} \mybox{2}{1}
    \mybox{3}{1} \mybox{3}{2} \mybox{4}{2} \mybox{5}{2} \mybox{6}{2}
    \end{tikzpicture}
    \endpgfgraphicnamed
    \end{center}
The polyomino is not a ribbon parallelogram polyomino, but can be made into one by removing either the rightmost box on the lowest row, or removing the leftmost box in the middle row.
\end{example}
One aspect of parallelogram polyominoes has proven crucial in their analysis with regard to the sandpile model.
Given a $(m,n)-$parallelogram polyomino $P$, we define the {\it{bounce path}} of $P$ to be the path that starts at $(m,n)$, goes west to $(m-1,n)$, 
turns and goes south until encountering the lower path of $P$, turns and moves west until encountering the upper path of $P$, and then repeats these last two steps until it encounters the origin.

\begin{example}
The bounce path of the parallelogram polyomino from Example~\ref{firstpp} is illustrated in this diagram.\\
    \begin{center}
    \def\oce{0.3}
        \def\mskip{1}
    \begin{tikzpicture}[scale=0.45]
    \newcommand{\mybox}[2]{\draw [fill=grey] ({#1},{#2}) rectangle (1+#1,1+#2);}
    \newcommand{\bleep}[4]{\draw [line width=1.5pt] (\mskip*#1,\mskip*#2) -- (\mskip*#3,\mskip*#4);}
    \newcommand{\vtxst}[3]{\draw (0.25+\mskip*{#1},0.5+\mskip*{#2}) node{#3};}
    \newcommand{\htxst}[3]{\draw (0.5+\mskip*{#1},0.3+\mskip*{#2}) node{#3};}
        \foreach \x in {0,1,2,...,7}
                \draw [color=grey] (0+\mskip*\x,0) -- (0+\mskip*\x,3*\mskip);
        \foreach \y in {0,1,2,...,3}
                \draw [color=grey] (0,\mskip*\y) -- (7*\mskip,\mskip*\y);
    \mybox{0}{0} \mybox{1}{0} \mybox{2}{0} \mybox{3}{0} \mybox{2}{1}
    \mybox{3}{1} \mybox{3}{2} \mybox{4}{2} \mybox{5}{2} \mybox{6}{2}
    \draw [->,line width=1.5pt] (7*\mskip,3*\mskip) -- (6*\mskip,3*\mskip);
    \draw [->,line width=1.5pt] (6*\mskip,3*\mskip) -- (6*\mskip,2*\mskip);
    \draw [->,line width=1.5pt] (6*\mskip,2*\mskip) -- (3*\mskip,2*\mskip);
    \draw [->,line width=1.5pt] (3*\mskip,2*\mskip) -- (3*\mskip,0*\mskip);
    \draw [->,line width=1.5pt] (3*\mskip,0*\mskip) -- (0*\mskip,0*\mskip);
    \draw [line width=0,color=white] (-0.1*\mskip,-0.3*\mskip) rectangle (7.1*\mskip,3.3*\mskip);
    \end{tikzpicture}
    \end{center}
\end{example}

The bounce path was used in the original paper~\cite{dlb} linking parallelogram polyominoes and the sandpile model on $K_{m,n}$ to 
record the vertices that toppled during consecutive parallel topplings of a configuration when Dhar's burning algorithm was applied to it. 

We can label a parallelogram polyomino in such a way to create a collection of labelled objects that are extremely useful for our purposes.
Label the horizontal steps of the upper path of such a polyomino with unique entries from the set $\{v_0,\ldots,v_{m-1}\}$.
The rightmost such step must have label $v_0$. A sequence of steps having the same height must have labels that are decreasing from left to right. 
Label the vertical steps of the lower path with unique entries from the set $\{v_m,\ldots,v_{m+n-1}\}$. 
Vertical steps that are the same distance from the vertical axis must have labels that are increasing from top to bottom.

\begin{example} \ \\
    \begin{center}
    \def\oce{0.3}
        \def\mskip{1}
    \begin{tikzpicture}[scale=0.45]
    \newcommand{\mybox}[2]{\draw [fill=grey] ({#1},{#2}) rectangle (1+#1,1+#2);}
    \newcommand{\bleep}[4]{\draw [line width=1.5pt] (\mskip*#1,\mskip*#2) -- (\mskip*#3,\mskip*#4);}
    \newcommand{\vtxst}[3]{\draw (0.25+\mskip*{#1},0.5+\mskip*{#2}) node{#3};}
    \newcommand{\htxst}[3]{\draw (0.5+\mskip*{#1},0.3+\mskip*{#2}) node{#3};}
        \foreach \x in {0,1,2,...,7}
                \draw [color=grey] (0+\mskip*\x,0) -- (0+\mskip*\x,3*\mskip);
        \foreach \y in {0,1,2,...,3}
                \draw [color=grey] (0,\mskip*\y) -- (7*\mskip,\mskip*\y);
    \mybox{0}{0} \mybox{1}{0} \mybox{2}{0} \mybox{3}{0} \mybox{2}{1}
    \mybox{3}{1} \mybox{3}{2} \mybox{4}{2} \mybox{5}{2} \mybox{6}{2}
    \draw [dotted,<-] (\oce+7*\mskip,2.5*\mskip) -- (8*\mskip,2.5*\mskip);
    \node [right] () at (8*\mskip,2.5*\mskip) {$\{v_8\}$};
    \draw [dotted,<-] (\oce+4*\mskip,0.5*\mskip) -- (8*\mskip,0.5*\mskip);
    \node [right] () at (8*\mskip,0.5*\mskip) {$\{v_7,v_9\}$};
    \draw [dotted,<-] (5.5*\mskip,3*\mskip+\oce) -- (5.5*\mskip,4.1*\mskip);
    \node [above] () at (5.2*\mskip,4*\mskip) {$\{v_5,v_4,v_1,v_0\}$};
    \draw [dotted,<-] (2.5*\mskip,2*\mskip+\oce) -- (2.5*\mskip,5.1*\mskip);
    \node [above] () at (2.5*\mskip,5*\mskip) {$\{v_3\}$};
    \draw [dotted,<-] (0.5*\mskip,1*\mskip+\oce) -- (0.5*\mskip,4.1*\mskip);
    \node [above] () at (0.5*\mskip,4*\mskip) {$\{v_6,v_2\}$};
    \draw [line width=0,color=white] (-0.1*\mskip,-0.3*\mskip) rectangle (7.1*\mskip,3.3*\mskip);
    \end{tikzpicture}
    \end{center}
\end{example}

Conventions differ from paper to paper and in this paper we will choose a slightly different way to represent parallelogram polyominoes.
We will first illustrate this and then give the formal definition.

\begin{example}\label{dppe}
Replace filled cells with 1s and replace unfilled cells with 0s.
Label the top of the tableau with the vertex labels that correspond the horizontal steps of upper path in those columns.
The top right horizontal step by convention has label $v_0$.
For horizontal steps having the same height, ensure the labels are decreasing from left to right. 
For example, there are 3! ways to do this for the horizontal steps corresponding to $\{v_1,v_4,v_5\}$, but we label the columns from right to left that correspond to these steps on the bounce path with $v_1$, $v_4$, and $v_5$.
For vertical steps on the lower path, do the same.
Finally, rotate the original diagram anti-clockwise a quarter turn.
    \begin{center}
    \def\oce{0.3}
        \def\mskip{1}
\begin{tabular}{lll}
    \begin{tikzpicture}[scale=0.45]
    \newcommand{\mybox}[2]{\draw [fill=grey] ({#1},{#2}) rectangle (1+#1,1+#2);}
    \newcommand{\bleep}[4]{\draw [line width=1.5pt] (\mskip*#1,\mskip*#2) -- (\mskip*#3,\mskip*#4);}
    \newcommand{\vtxst}[3]{\draw (0.25+\mskip*{#1},0.5+\mskip*{#2}) node{#3};}
    \newcommand{\htxst}[3]{\draw (0.5+\mskip*{#1},0.3+\mskip*{#2}) node{#3};}
        \foreach \x in {0,1,2,...,7}
                \draw [color=grey] (0+\mskip*\x,0) -- (0+\mskip*\x,3*\mskip);
        \foreach \y in {0,1,2,...,3}
                \draw [color=grey] (0,\mskip*\y) -- (7*\mskip,\mskip*\y);
    \mybox{0}{0} \mybox{1}{0} \mybox{2}{0} \mybox{3}{0} \mybox{2}{1}
    \mybox{3}{1} \mybox{3}{2} \mybox{4}{2} \mybox{5}{2} \mybox{6}{2}
    \draw [dotted,<-] (\oce+7*\mskip,2.5*\mskip) -- (8*\mskip,2.5*\mskip);
    \node [right] () at (8*\mskip,2.5*\mskip) {$\{v_8\}$};
    \draw [dotted,<-] (\oce+4*\mskip,0.5*\mskip) -- (8*\mskip,0.5*\mskip);
    \node [right] () at (8*\mskip,0.5*\mskip) {$\{v_7,v_9\}$};
    \draw [dotted,<-] (5.5*\mskip,3*\mskip+\oce) -- (5.5*\mskip,4.1*\mskip);
    \node [above] () at (5.2*\mskip,4*\mskip) {$\{v_0,v_1,v_4,v_5\}$};
    \draw [dotted,<-] (2.5*\mskip,2*\mskip+\oce) -- (2.5*\mskip,5.1*\mskip);
    \node [above] () at (2.5*\mskip,5*\mskip) {$\{v_3\}$};
    \draw [dotted,<-] (0.5*\mskip,1*\mskip+\oce) -- (0.5*\mskip,4.1*\mskip);
    \node [above] () at (0.5*\mskip,4*\mskip) {$\{v_2,v_6\}$};
    \draw [line width=0,color=white] (-0.1*\mskip,-0.3*\mskip) rectangle (7.1*\mskip,3.3*\mskip);
	\draw [->,line width=2pt] (11.5*\mskip,1.5*\mskip+\oce) -- (12.5*\mskip,1.5*\mskip+\oce);
    \end{tikzpicture} 
&
\begin{tikzpicture}
\def\step{0.43}
\draw (0,0) node [anchor = north west]  {\young(0001111,0011000,1111000)};
\foreach[count=\y] \lab in {8,7,9}
        \draw (0.25+8*\step,-\y*\step+0.05) node [anchor=east]{\tiny{$v_{\lab}$}};
\foreach[count=\x] \lab in {6,2,3, 5,4,1 , 0}
        \draw (\x*\step-0.05,0.2) node [anchor=north]{\tiny{$v_{\lab}$}};
	\draw [->,line width=2pt] (4,-0.6) -- +(0.45,0);
\end{tikzpicture} 
 & 
\begin{tikzpicture}
\def\step{0.43}
\draw (0,0) node [anchor = north west]  {\young(100,100,100,111,011,001,001)};
\foreach[count=\y] \lab in {0,1,4,5,3,2,6}
        \draw (0.25,-\y*\step+0.05) node [anchor=east]{\tiny{$v_{\lab}$}};
\foreach[count=\x] \lab in {8,7,9}
        \draw (\x*\step-0.05,0.2) node [anchor=north]{\tiny{$v_{\lab}$}};
\end{tikzpicture} 
\end{tabular}
    \end{center}
\end{example}

Let us now formally define parallelogram polyominoes as a particular type of 0/1 tableaux:

\begin{definition}
We call a 0/1 tableau $T$ a {\it{parallelogram polyomino}} 
of type $(m,n)$ if it satisfies the following:
\begin{enumerate}
\item[(i)] $T$ has of $m$ rows and $n$ columns.
\item[(ii)] The top left entry $T_{11}=1$ and the bottom right entry $T_{mn}=1$. 
\item[(iii)] There is at least one 1 in every row of $T$ and the 1s in a row are contiguous. 
\item[(iv)] The leftmost 1 in a row is weakly to the right of the leftmost 1 in the row above it.
\item[(v)] The rightmost 1 in a row is weakly to the right of the rightmost 1 in the row above it.
\end{enumerate}
Let $\Para_{m,n}$ be the set of all {\it{parallelogram polyomino}} of type $(m,n)$, and let $\Ribbon_{m,n} \subseteq \Para_{m,n}$ be the set of ribbon parallelogram  polyominoes.
\end{definition}

\begin{example}\label{pararib:example} \ \\
\begin{center}
\begin{tikzpicture}
\def\step{0.43}
\begin{scope}[xshift=0cm, yshift=0cm]
\draw (0,0) node [anchor = north west]  {\young(1100,1100,0110,0111,0011,0011)};
\draw (-0.5,-1.5) node {$T_1=$};
\draw (3,-1.5) node {$\in \Para_{6,4}$;};
\end{scope}
\begin{scope}[xshift=6cm, yshift=0cm]
\draw (0,0) node [anchor = north west]  {\young(1100,0100,0100,0110,0010,0011)};
\draw (-0.5,-1.5) node {$T_2=$};
\draw (3,-1.5) node {$\in \Ribbon_{6,4}$.};
\end{scope}
\end{tikzpicture} 
\end{center}
\end{example}

Let us now define the bounce path (or bounce polyomino even) of a parallelogram polyomino.

\begin{definition}
Let $P \in \Para_{m,n}$. Let $\bounce(P)$ be the ribbon parallelogram that is contained within $P$ as defined as follows.
Start at the cell in position (1,1), i.e. the top left cell that contains a 1. 
Move right until meeting the rightmost 1. Move down until meeting the lowest 1. Move right until meeting the rightmost 1, and so on until reaching the cell at position $(m,n)$. The result is $\bounce(P) \in \Rib_{m,n}$.
\end{definition}

\begin{example}
Consider the polyomino $T_1$ from Example~\ref{pararib:example}.
\begin{center}
\begin{tikzpicture}
\def\step{0.43}
\begin{scope}[xshift=0cm, yshift=0cm]
\draw (0,0) node [anchor = north west]  {\young(1100,0100,0100,0111,0001,0001)};
\draw[left] (-0.0,-1.5) node {$\bounce(T_1)=$};
\end{scope}
\end{tikzpicture}
\end{center}
\end{example}

We will now define labelled parallelogram polyominoes as row and column labelled versions of these tableaux.

\begin{definition}\label{def:dpp}
Let $T$ be a tableau consisting of $m$ rows and $n$ columns.
Let $\ell$ be a labelling of its rows and columns:
$$\ell = (\ell(\mathrm{row}_1),\ldots,\ell(\mathrm{row}_m),\ell(\mathrm{col}_1),\ldots,\ell(\mathrm{col}_n)).$$
We call a pair $D=(T,\ell)$ a {\it{labelled parallelogram polyomino}} 
of type $(m,n)$ if it satisfies the following:
\begin{enumerate}
\item[(i)] $T \in \Para_{m,n}$.
\item[(ii)] The labels of the $n$ columns are a permutation of the set $\{v_{m},\ldots,v_{m+n-1}\}$ and have the following property: 
	the labels of those columns whose topmost ones are at the same height are increasing from left to right.
\item[(iii)] The labels of the $m$ rows are a permutation of the set $\{v_0,v_1,\ldots,v_{m-1}\}$ and have the following property: 
	the label of the top row is $v_0$ and the labels of those rows whose leftmost ones are the same distance from the side are increasing from top to bottom.
\end{enumerate}
Let $\DecPara_{m,n}$ be the set of all {\it{labelled parallelogram polyomino}} of  type $(m,n)$, and let $\DecRib_{m,n} \subseteq \DecPara_{m,n}$ be the set of 
{\it{labelled ribbon parallelogram polyominoes}}.
\end{definition}

\begin{example} \label{bouncepara}
$D_1=(T_1,\ell_1)$ is a labelled parallelogram polyomino where $T_1$ is given in Example~\ref{pararib:example} 
and
\begin{align*}
\ell_1 =& (\ell_1(\mathrm{row}_1),\ldots,\ell_1(\mathrm{row}_m),\ell_1(\mathrm{col}_1),\ldots,\ell_1(\mathrm{col}_n))\\
=& (v_0,v_1,v_5,v_3,v_2,v_4,v_7,v_9,v_8,v_6).
\end{align*}
$D_2=(T_2,\ell_2)$ is a labelled ribbon parallelogram polyomino where $T_2$ is given in Example~\ref{pararib:example}
and $\ell_2=(v_0,v_2,v_4,v_5,v_3,v_1,v_6,v_8,v_7,v_9)$.
These labelled parallelogram polyominoes are illustrated below.
\begin{center}
\begin{tikzpicture}
\def\step{0.43}
\begin{scope}[xshift=0cm, yshift=0cm]
\draw (0,0) node [anchor = north west]  {\young(1100,1100,0110,0111,0011,0011)};
\foreach[count=\y] \lab in {0,1,5,3,2,4}
        \draw (0.25,-\y*\step+0.05) node [anchor=east]{\tiny{$v_{\lab}$}};
\foreach[count=\x] \lab in {7,9,8,6}
       \draw (\x*\step-0.05,0.2) node [anchor=north]{\tiny{$v_{\lab}$}};
\draw (3.2,-1.5) node {$\in \DecPara_{6,4}$;};
\draw [left] (-0.2,-1.5) node {$D_1=$};
\end{scope}
\begin{scope}[xshift=6cm, yshift=0cm]
\draw (0,0) node [anchor = north west]  {\young(1100,0100,0100,0110,0010,0011)};
\foreach[count=\y] \lab in {0,2,4,5,3,1}
        \draw (0.25,-\y*\step+0.05) node [anchor=east]{\tiny{$v_{\lab}$}};
\foreach[count=\x] \lab in {6,8,7,9}
        \draw (\x*\step-0.05,0.2) node [anchor=north]{\tiny{$v_{\lab}$}};
\draw (3.2,-1.5) node {$\in \DecRib_{6,4}$.};
\draw [left] (-0.2,-1.5) node {$D_2=$};
\end{scope}
\end{tikzpicture} 
\end{center}
\end{example}

\subsection{Rectangular decorated EW-tableaux}\label{sec:EWT}

EW-tableaux are 0/1 tableaux that were introduced in \cite{evw}.
Selig et al.~\cite{sss} used a slightly different definition for EW-tableaux wherein the singular row of `all 1s' must always be
the top row.
In this paper we will the latter definition along with two indexing conventions.

\begin{definition}[\cite{sss}]
\label{def:EWT}
A rectangular EW-tableau $T$ is a 0/1-filling of an $m$ row and $n$ column rectangle that satisfies the following properties:
\begin{enumerate}
\item[(i)] The top row of $T$ has a 1 in every cell.
\item[(ii)] Every other row has at least one cell containing a 0.
\item[(iii)] No four cells of $T$ that form the corners of a rectangle have 0s in two diagonally opposite corners and 1s in the other two.
\end{enumerate}
We denote by $EW_{m,n}$ the set of rectangular EW-tableaux having $m$ rows and $n$ columns.
\end{definition}

In Figure~\ref{fig:summary}, the set $\EW^{\textsf{Fer}}_{m,n}$ is the set of those members of $\EW_{m,n}$ whose 1s form a Ferrers shape in the top right corner.

\begin{example}\label{firsttabex}
Consider the following EW-tableau:
\begin{center}
\EWdiagramlab{11111,10011,00010,10011}{0,1,2,3}{4,5,6,7,8}{$T=$}
\end{center}
Examples of its entries are
$T_{4,5} 
=1$,
$T_{4,4} 
=1$, and
$T_{4,3} 
= 0$.
\end{example}

\begin{example}\label{exampleone}
For each of the following EW-tableaux, the row labels, from top to bottom, are $v_0,v_1,v_2$, and the column labels, read from left to right, are $v_3,v_4$. 
\begin{align*}
EW_{3,2} = \left\{ \scriptsize \young(11,01,01) , \young(11,10,10) , \young(11,01,00) , \young(11,00,01) , \young(11,10,00) , \young(11,00,10) , \young(11,00,00) \right\}.
\end{align*}
\end{example}

Before we introduce the notion of decorated EW-tableaux, 
we must introduce the notion of cornersupport 0s and 1s in EW-tableaux, 
as the decoration numbers depend on these quantities.

\begin{definition}\label{def:cornersupport}
Let $T\in \EW_{m,n}$.
Let us call a pair of cells $a$ and $b$ in a $T$ {\emph{non-attacking}} if they are in different rows and different columns.  
We also say that $b$ is non-attacking with respect to $a$ if $a$ and $b$ is a non-attacking pair, 
or simply that $b$ is non-attacking if it is clear from context what $a$ is.
We say that an entry $x \in \{0,1\}$ in $T_{jk}$ is a {\emph{cornersupport}} entry 
if and only if 
there exists a non-attacking $\mycomp{x}\not=x$ in $T_{j'k'}$ such that $T_{j'k}$ and $T_{jk'}$ both contain $x$.  
We say a cell is a {\it{cornersupport}} if it contains a cornersupport entry.
\end{definition}

\begin{example}\label{ex:rightangle}
Consider the following rectangular EW-tableau $T$.
The $1$ that appears at $T_{13}
$ is a \rightangle\ entry as is indicated by
the shaded entries. 
Similarly, the 0 that appears at position $T_{22}
$ is a cornersupport entry, 
this is validated by the non-attacking 1 at position $T_{34}
$ and the entries in the sub-square they induce.
\begin{center}
\begin{tikzpicture}
  \def\minix{0.21}
  \def\miniy{0.21}
\def\xx{0.435}
\def\bx{0}\def\by{0}
\def\initx{0.28}\def\inity{0.37}
\def\jnitx{0.31}\def\jnity{0.47}
\def\stepx{0.43}\def\stepy{0.43}
\fill[black!30!white] (-1.6525-\minix,-0.355-\miniy) rectangle (-1.6525+\minix,-0.355+\miniy);
\fill[black!30!white] (-1.6525-\minix,-1.65-\miniy) rectangle (-1.6525+\minix,-1.65+\miniy);
\fill[black!30!white] (-0.7875-\minix,-0.355-\miniy) rectangle (-0.7875+\minix,-0.355+\miniy);
\fill[black!30!white] (-0.7875-\minix,-1.65-\miniy) rectangle (-0.7875+\minix,-1.65+\miniy);
\draw (\bx,\by) node [anchor = north east]  {\young(1111,0010,0011,0010)};
\foreach \x/\y/\lab in {0/0/0,2/1/1,2/2/2,3/3/3}  
    \draw (\bx-\initx-4.75*\stepx,\by-\inity-\y*\stepy) node [anchor=west]{\tiny{$v_{\lab}$}};
\foreach \x/\y/\lab in {0/0/7,1/0/6,2/2/5,3/3/4}
    \draw (\bx-\jnitx-\x*\stepx,\by-\jnity+1.7*\stepy) node [anchor=north]{\tiny{$v_{\lab}$}}; 
\draw (-2.25,-1) node [anchor = east]{$T=$};
\end{tikzpicture}
\end{center}
\end{example}

Given $T\in \EW_{m,n}$ let
\begin{align*}
\eta_j(T) = \begin{cases}
\mbox{number of non-cornersupport 0s in row $v_j$,} & \mbox{ if $j<m$,}\\
\mbox{number of non-cornersupport 1s in column $v_j$,} & \mbox{ if $j\geq m$.}
\end{cases}
\end{align*}

\begin{definition}
A decorated rectangular EW-tableau of order $(m,n)$ is a pair $D=(T,a)$ where $T\in\EW_{m,n}$ 
and the sequence of integers $a=(a_1,\ldots,a_{m+n-1})$ satisfies $a_i \in \{0,1,\ldots,\eta_i(T)-1\}$ for all $1\leq i \leq m+n-1$.
Here the value $a_i$ is associated with the row/column having label $v_i$.
Let $\DEW_{m,n}$ be the set of all such decorated EW-tableaux of order $(m,n)$.
\end{definition}

\begin{example}\label{dewexample}
In the following tableau, we indicate in bold script those entries that {\underline{are}} cornersupport entries.
For the row with label $v_1$, there are three 0s, two of which are cornersupport 0s. The number of non-cornersupport 0s in this row is 1, so $\eta_1(T)=1$.
For row labelled $v_2$, there are two 0s that are both non-cornersupport 0s, so $\eta_2(T)=2$.
For row labelled $v_3$ we have $\eta_3(T) = 1$.
Turning now to the columns, we have $\eta_4(T)=1$ since there is a single 1 in that column and it is not a cornersupport 1. 
For the remainder we have $\eta_5(T)=1$, $\eta_6(T)=2$, and $\eta_7(T)=1$.
\begin{center}
\begin{tikzpicture}
  \def\minix{0.21}
  \def\miniy{0.21}
\def\xx{0.435}
\def\bx{0}\def\by{0}
\def\initx{0.25}\def\inity{0.37}
\def\jnitx{0.31}\def\jnity{0.47}
\def\stepx{0.43}\def\stepy{0.43}
\def\co{\bf{1}}
\def\cz{\bf{0}}
\draw (\bx,\by) node [anchor = north east]  {\young(11\co\co,\co\co10,00\co1,\cz\cz10)};
\foreach \x/\y/\lab in {0/0/0,2/1/1,2/2/2,3/3/3}  
    \draw (\bx-\initx-4.75*\stepx,\by-\inity-\y*\stepy) node [anchor=west]{\tiny{$v_\lab$}};
\foreach \x/\y/\lab in {0/0/7,1/0/6,2/2/5,3/3/4}
    \draw (\bx-\jnitx-\x*\stepx,\by-\jnity+1.7*\stepy) node [anchor=north]{\tiny{$v_\lab$}}; 
\draw (-2.25,-1) node [anchor = east]{$T=$};
\end{tikzpicture}
\end{center}
There are four decorated tableaux $D \in \DEW_{4,4}$ that have $T$ as the underlying EW-tableau, these are $(T,a)$ where 
\begin{align*}
a=(a_1,\ldots,a_7) \in & [0,0]\times[0,1] \times [0,0] \times [0,0] \times [0,0] \times [0,1] \times [0,0] \\
&= \{(0,0,0,0,0,0,0), (0,1,0,0,0,0,0), (0,0,0,0,0,1,0), (0,1,0,0,0,1,0)\} .
\end{align*}
These four decorated rectangular EW-tableaux are:
\begin{center}
\begin{tabular}{ll}
\begin{tikzpicture}
\def\minix{0.21}
\def\miniy{0.21}
\def\xx{0.435}
\def\bx{0}\def\by{0}
\def\initx{0.25}\def\inity{0.37}
\def\finitx{-0.20}\def\finity{-1.8}
\def\jnitx{0.31}\def\jnity{0.47}
\def\stepx{0.43}\def\stepy{0.43}
\draw (\bx,\by) node [anchor = north east]  {\young(1111,0010,0011,0010)};
\foreach \x/\y/\lab in {0/0/0,2/1/1,2/2/2,3/3/3}  
    \draw (\bx-\initx-4.75*\stepx,\by-\inity-\y*\stepy) node [anchor=west]{\tiny{$v_\lab$}};
\foreach \x/\y/\lab in {0/0/7,1/0/6,2/2/5,3/3/4}
    \draw (\bx-\jnitx-\x*\stepx,\by-\jnity+1.7*\stepy) node [anchor=north]{\tiny{$v_\lab$}}; 
\foreach \x/\y/\lab in {0/0/,2/1/0,2/2/0,3/3/0}  
    \draw (\finitx,\by-\inity-\y*\stepy) node [anchor=west]{\tiny{\bf\lab}};
\foreach \x/\y/\lab in {0/0/0,1/0/0,2/2/0,3/3/0}
    \draw (\bx-\jnitx-\x*\stepx,\finity) node [anchor=north]{\tiny{\bf\lab}}; 
\end{tikzpicture}
&
\begin{tikzpicture}
\def\minix{0.21}
\def\miniy{0.21}
\def\xx{0.435}
\def\bx{0}\def\by{0}
\def\initx{0.25}\def\inity{0.37}
\def\finitx{-0.20}\def\finity{-1.8}
\def\jnitx{0.31}\def\jnity{0.47}
\def\stepx{0.43}\def\stepy{0.43}
\draw (\bx,\by) node [anchor = north east]  {\young(1111,0010,0011,0010)};
\foreach \x/\y/\lab in {0/0/0,2/1/1,2/2/2,3/3/3}  
    \draw (\bx-\initx-4.75*\stepx,\by-\inity-\y*\stepy) node [anchor=west]{\tiny{$v_\lab$}};
\foreach \x/\y/\lab in {0/0/7,1/0/6,2/2/5,3/3/4}
    \draw (\bx-\jnitx-\x*\stepx,\by-\jnity+1.7*\stepy) node [anchor=north]{\tiny{$v_\lab$}}; 
\foreach \x/\y/\lab in {0/0/,2/1/0,2/2/1,3/3/0}  
    \draw (\finitx,\by-\inity-\y*\stepy) node [anchor=west]{\tiny{\bf\lab}};
\foreach \x/\y/\lab in {0/0/0,1/0/0,2/2/0,3/3/0}
    \draw (\bx-\jnitx-\x*\stepx,\finity) node [anchor=north]{\tiny{\bf\lab}}; 
\end{tikzpicture} \\
\begin{tikzpicture}
\def\minix{0.21}
\def\miniy{0.21}
\def\xx{0.435}
\def\bx{0}\def\by{0}
\def\initx{0.25}\def\inity{0.37}
\def\finitx{-0.20}\def\finity{-1.8}
\def\jnitx{0.31}\def\jnity{0.47}
\def\stepx{0.43}\def\stepy{0.43}
\draw (\bx,\by) node [anchor = north east]  {\young(1111,0010,0011,0010)};
\foreach \x/\y/\lab in {0/0/0,2/1/1,2/2/2,3/3/3}  
    \draw (\bx-\initx-4.75*\stepx,\by-\inity-\y*\stepy) node [anchor=west]{\tiny{$v_\lab$}};
\foreach \x/\y/\lab in {0/0/7,1/0/6,2/2/5,3/3/4}
    \draw (\bx-\jnitx-\x*\stepx,\by-\jnity+1.7*\stepy) node [anchor=north]{\tiny{$v_\lab$}}; 
\foreach \x/\y/\lab in {0/0/,2/1/0,2/2/0,3/3/0}  
    \draw (\finitx,\by-\inity-\y*\stepy) node [anchor=west]{\tiny{\bf\lab}};
\foreach \x/\y/\lab in {0/0/0,1/0/0,2/2/1,3/3/0}
    \draw (\bx-\jnitx-\x*\stepx,\finity) node [anchor=north]{\tiny{\bf\lab}}; 
\end{tikzpicture}
& 
\begin{tikzpicture}
\def\minix{0.21}
\def\miniy{0.21}
\def\xx{0.435}
\def\bx{0}\def\by{0}
\def\initx{0.25}\def\inity{0.37}
\def\finitx{-0.20}\def\finity{-1.8}
\def\jnitx{0.31}\def\jnity{0.47}
\def\stepx{0.43}\def\stepy{0.43}
\draw (\bx,\by) node [anchor = north east]  {\young(1111,0010,0011,0010)};
\foreach \x/\y/\lab in {0/0/0,2/1/1,2/2/2,3/3/3}  
    \draw (\bx-\initx-4.75*\stepx,\by-\inity-\y*\stepy) node [anchor=west]{\tiny{$v_\lab$}};
\foreach \x/\y/\lab in {0/0/7,1/0/6,2/2/5,3/3/4}
    \draw (\bx-\jnitx-\x*\stepx,\by-\jnity+1.7*\stepy) node [anchor=north]{\tiny{$v_\lab$}}; 
\foreach \x/\y/\lab in {0/0/,2/1/0,2/2/1,3/3/0}  
    \draw (\finitx,\by-\inity-\y*\stepy) node [anchor=west]{\tiny{\bf\lab}};
\foreach \x/\y/\lab in {0/0/0,1/0/0,2/2/1,3/3/0}
    \draw (\bx-\jnitx-\x*\stepx,\finity) node [anchor=north]{\tiny{\bf\lab}}; 
\end{tikzpicture}
\end{tabular}
\end{center}
\end{example}

In the paper ~\cite{dsss}, the definition of cornersupport entries was more involved, since it involved completing a general 
EW-tableau that is a Ferrers diagram in a particular manner to achieve a `supplementary tableau'.
The entries in the supplementary tableau were then used to determine whether a given tableau entry was cornersupport, or not.
Since we are dealing with rectangular EW-tableaux, the supplementary tableaux to which they correspond are the tableaux themselves.
This fact has allowed for shorter and more self-contained definitions in this subsection.
It allows us to offer the following interesting generalization of rectangular EW-tableaux.
Its generalisation to Ferrers shapes could be an interesting avenue of research to explore elsewhere.

\begin{definition}\label{mew:def}
We call $M=(T,a)$ a {\it{marked EW-tableau}} if $T \in EW_{m,n}$ and $a$ is a marking of non-cornersupport entries where 
\begin{enumerate}
\item[(i)] In each row of $M$ below the first row, the $(a_i+1)^{th}$ non-cornersupport 0 (from left) is marked with a $\star$. 
\item[(ii)] In each column of $M$ precisely, the $(a_i+1)^{th}$ non-cornersupport 1 (from the top) is marked with a $\star$. 
\end{enumerate}
Let $\MEW_{m,n}$ be the set of all such rectangular marked EW-tableaux.
\end{definition}

Not all entries of a row or column can be marked, only those that are non-cornersupport.
We will see in Subsection~\ref{see:ncs} that there is always at least one appropriate entry to mark so the above definition is well-defined.
Moreover, the definition above is equivalent to that of decorated rectangular EW-tableau, but has the advantage that 
the information regarding its structure is given by a marking of entries in the tableau rather than affixing a number the end 
of a row or bottom of a column.

\begin{example}
Consider the following EW-tableau $T \in \EW_{7,13}$.
\begin{center}
\begin{tikzpicture}
\def\step{0.43}
\draw (0,0) node [anchor = north west]  {\young(1111111111111,0011100000000,1011110111000,1011100000000,1011110111000,0011100000000,1011110111000)};
\foreach[count=\y] \lab in {0,1,2,3,4,5,6 }
        \draw (0.25,-\y*\step+0.05) node [anchor=east]{\tiny{$v_{\lab}$}};
\foreach[count=\x] \lab in {7,8,9,10,11,12,13,14,15,16,17,18,19}
        \draw (\x*\step-0.05,0.2) node [anchor=north]{\tiny{$v_{\lab}$}};
      \end{tikzpicture}
\end{center}
The following table indicates those entries that are non-cornersupport 1s and 0s.
\begin{center}
\begin{tikzpicture}
\def\step{0.43}
\def\zs{0^{\star}}
\draw (0,0) node [anchor = north west]  {\young(~1~~~~1~~~111,0~111~~~~~~~~,~0~~~10111000,1~~~~0~000~~~,~0~~~10111000,0~111~~~~~~~~,~0~~~10111000)};
\foreach[count=\y] \lab in {0,1,2,3,4,5,6 }
        \draw (0.25,-\y*\step+0.05) node [anchor=east]{\tiny{$v_{\lab}$}};
\foreach[count=\x] \lab in {7,8,9,10,11,12,13,14,15,16,17,18,19}
        \draw (\x*\step-0.05,0.2) node [anchor=north]{\tiny{$v_{\lab}$}};
      \end{tikzpicture}
\end{center}
We are now free to mark these non-cornersupport entries in the manner described in Definition~\ref{mew:def} and the following marked tableau is a member of $\MEW_{7,13}$.
\begin{center}
\begin{tikzpicture}
\def\step{0.43}
\def\zs{0^{\star}}
\def\os{1^{\star}}
\draw (0,0) node [anchor = north west]  {\young(1\os1111\os111\os\os\os,\zs01\os100000000,1011110\os11\zs00,\os011100\zs00000,101111011\os00\zs,\zs0\os1\os00000000,10111\os01\os1\zs00)};
\foreach[count=\y] \lab in {0,1,2,3,4,5,6 }
        \draw (0.25,-\y*\step+0.05) node [anchor=east]{\tiny{$v_{\lab}$}};
\foreach[count=\x] \lab in {7,8,9,10,11,12,13,14,15,16,17,18,19}
        \draw (\x*\step-0.05,0.2) node [anchor=north]{\tiny{$v_{\lab}$}};
      \end{tikzpicture}
\end{center}
The above marked EW-tableau corresponds to the decorated EW-tableaux $D=(T,a) \in \DEW_{7,13}$ where
$T$ is as given at the start of this example, and 
$$a=(0,2,1,4,0,2,0,0,1,0,1,2,0,0,2,1,0,0,0).$$
\end{example}


\section{A bijection from $\EW_{m,n}$ to $\DecRib_{m,n}$}

First we will define and prove the composition of the bijections $\bERm$ and $\bRmMr$ to yield $\bEMr$.

\begin{definition}
\label{myalg1}
Let $T \in \EW_{m,n}$.
Let the columns have labels (from left to right) $v_m,\ldots,v_{m+n-1}$. 
Label the rows of $T$ (from top to bottom) $v_0,\ldots,v_{m-1}$.
\begin{description}
\item[Step 1] Permute the columns of the EW-tableau $T$ such that there are only zeros to the left of every zero. For adjacent columns that are identical to one-another, ensure the labels are increasing from left to right.
\item[Step 2] Permute the rows of $T$ such that there are only ones above every one. For adjacent rows that are identical to one-another, sure the labels are increasing from top to bottom. 
Let $T'$ be the resulting tableau and let $\ell$ be the labelling of its rows and columns.
\item[Step 3] Let $R$ be an empty tableau having $m$ rows and $n$ columns. Let the rows and columns of $R$ have labelling $\ell$, i.e. the same as $T'$.
\item[Step 4] 
If $T'_{i,j} = 1$ and ($T'_{i+1,j} = 0$ or $i=m$) then set $R_{i,j}=1$.
If $T'_{i,j}=0$ and ($T'_{i,j+1} =1$ or $j=n$) then set $R_{i,j}=1$.
Fill all remaining unfilled  entries of $R$ with zeros.
\end{description}
Let the outcome of this procedure be $\mydecrib(T):=(R,\ell)$.
\end{definition}

\begin{proposition}\label{four:two}
$\mydecrib: \EW_{m,n} \mapsto \DecRib_{m,n}$.
\end{proposition}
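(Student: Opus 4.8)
The plan is to verify the five defining conditions (i)--(v) of a parallelogram polyomino for the tableau $R$, then verify conditions (ii) and (iii) of Definition~\ref{def:dpp} for the labelling $\ell$, and finally verify that the area of $R$ is minimal, i.e. $R \in \DecRib_{m,n}$. Throughout I work with $T'$, the tableau obtained after Steps 1 and 2, which still lies in $\EW_{m,n}$ (permuting rows and columns preserves the EW property, since condition (iii) of Definition~\ref{def:EWT} is invariant under such permutations, and the top row of all $1$s stays the top row because every other row contains a $0$). After Step~1 each row of $T'$ has the form $0^a 1^b$ possibly with more structure; after Step~2 the columns are sorted so that $1$s sit above $0$s within each column. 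The key structural fact I will establish first is that $T'$ is a \emph{staircase} $0/1$ filling: there is a weakly decreasing ``boundary'' so that $T'_{i,j}=1$ iff $j$ is large enough relative to $i$. This follows because the forbidden rectangle condition, combined with columns being sorted (all $1$s on top) and rows being sorted (all $0$s on left), forces the set of $1$s to be a Young-diagram-like region anchored in the top-right; I would spell this out by a short contradiction argument using condition (iii) on a $2\times 2$ minor.

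Granting the staircase description of $T'$, Step~4 has a transparent meaning: $R_{i,j}=1$ exactly when cell $(i,j)$ is a $1$ of $T'$ whose south neighbour is a $0$ (or which is in the bottom row), \emph{or} a $0$ of $T'$ whose east neighbour is a $1$ (or which is in the last column). In other words, $R$ marks the ``inner corner boundary'' of the staircase region of $1$s together with the complementary boundary of the $0$-region. I would then check directly: the top-left entry $R_{1,1}=1$ (the top row of $T'$ is all $1$s, so if $n>1$ then $T'_{1,2}=1$ makes $R_{1,1}$ come from the $0$-rule only if $T'_{1,1}=0$, which is false; rather $T'_{1,1}=1$ and we need $T'_{2,1}=0$ --- this holds because the staircase boundary starts at the left edge of row $1$... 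I will phrase this carefully using the explicit staircase shape). The bottom-right $R_{m,n}=1$ is immediate from either rule. For condition (iii), contiguity of the $1$s in each row of $R$: in row $i$, reading left to right we first see $0$s of $T'$, then $1$s of $T'$; the $0$-rule fires only on the last $0$ (the one adjacent to the first $1$), and the $1$-rule fires on a contiguous suffix of the $1$-block (those $1$s whose south neighbour is a $0$), and these two pieces abut, giving a single contiguous block --- modulo a careful treatment of the bottom row and last column. Conditions (iv) and (v), the weak nesting of left- and right-endpoints of successive rows, will follow from the staircase inequalities: the boundary of the $1$-region is weakly decreasing going down, which translates into exactly the nesting required.

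The labelling conditions are mostly bookkeeping: Step~1 sorts columns so that identical columns have increasing labels left to right, and ``columns whose topmost $1$ is at the same height'' in $R$ correspond to columns of $T'$ that agree (same staircase level), so their labels increase left to right --- this is Definition~\ref{def:dpp}(ii). Similarly Step~2 gives Definition~\ref{def:dpp}(iii), with the top row getting label $v_0$ because the all-$1$s row is on top and one checks it is the row whose leftmost $1$ is furthest to the left; the column labels being a permutation of $\{v_m,\dots,v_{m+n-1}\}$ and row labels of $\{v_0,\dots,v_{m-1}\}$ is immediate from the relabelling convention. Finally, minimality of area: by construction each row of $R$ contributes its block of $1$s, and the staircase structure forces the blocks in consecutive rows to overlap in \emph{exactly one} column (the corner), which is the combinatorial signature of a ribbon; summing gives area $m+n-1$. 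I expect the main obstacle to be the very first step --- proving cleanly that $T'$ is genuinely a staircase shape after Steps 1--2, since a priori ``sorted rows'' and ``sorted columns'' plus the forbidden-rectangle condition need to be combined just so; once that normal form is nailed down, every remaining verification is a short case check against the explicit shape, and the ribbon/minimal-area claim is essentially a corollary of the one-column-overlap property.
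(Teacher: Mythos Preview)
Your proposal is correct and follows essentially the same approach as the paper: both arguments hinge on the observation that after Steps~1 and~2 the $1$s of $T'$ form a Ferrers shape anchored in the top-right corner, and that Step~4 then records the boundary of this shape as a ribbon running from $(1,1)$ to $(m,n)$. The paper's proof is considerably terser---it simply asserts the Ferrers shape and that its boundary yields a ribbon polyomino with the correct labelling---whereas you plan to verify each of the defining conditions explicitly; your identification of the staircase normal form for $T'$ as the crux is exactly right, and once that is established (via the forbidden-rectangle condition forcing total comparability of columns and of rows) every remaining check is the routine case analysis you outline.
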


\begin{proof}
Let $T \in \EW_{m,n}$. 
The initial labelling of the rows and columns of $T$ in Definition~\ref{myalg1} is consistent with the second point in Definition~\ref{def:dpp}.
Following the permuting of rows and columns of $T$ as outlined in steps 1 and 2 of Definition~\ref{myalg1} we achieve a tableau $T'$ 
such that the region of 1s is a Ferrers shape whose corner is in the top right of the tableau. Furthermore, the resulting labelling $\ell$
is consistent with the second point in Definition~\ref{def:dpp}. Note that since the top row of $T$ is the only row of all 1s, the same is true of $T'$ and its label remains unchanged as $v_0$.

In Step 3 of Definition~\ref{myalg1} we construct $R$ from $T'$ by looking at the 0/1 boundary in $T'$, and recording a shifted version of this as $R$. Since the boundary of the Ferrers shape of 1s is a path from the top left to the bottom right, $R$ will be a ribbon polyomino that begins at position $R_{11}$ and ends at $R_{mn}$.
\end{proof}

\begin{example}\label{four:three}
Consider the following EW-tableau $T \in \EW_{7,13}$.
\begin{center}
\begin{tikzpicture}
\def\step{0.43}
\draw (0,0) node [anchor = north west]  {\young(1111111111111,0011100000000,1011110111000,1011100000000,1011110111000,0011100000000,1011110111000)};
\foreach[count=\y] \lab in {0,1,2,3,4,5,6 }
        \draw (0.25,-\y*\step+0.05) node [anchor=east]{\tiny{$v_{\lab}$}};
\foreach[count=\x] \lab in {7,8,9,10,11,12,13,14,15,16,17,18,19}
        \draw (\x*\step-0.05,0.2) node [anchor=north]{\tiny{$v_{\lab}$}};
      \end{tikzpicture}
\end{center}
Apply Definition~\ref{myalg1}.
Step 1 tells us to permute the columns of the EW-tableau $T$ such that there are only zeros to the left of every zero, and to 
ensure the labels are increasing from left to right for adjacent columns that are identical to one-another.
Step 2 tells us to perform a similar permutation for rows of $T$ so that there are only ones above every one
and to ensure the labels are increasing from top to bottom for those adjacent rows that are identical to one-another.
The outcome of Steps 1 and 2 is the following tableau $T'$ and labelling $\ell$:
\begin{center}
\begin{tikzpicture}
\def\step{0.43}
\draw (0,0) node [anchor = north west]  {\young(1111111111111,0000011111111,0000011111111,0000011111111,0000000001111,0000000000111,0000000000111)};
\foreach[count=\y] \lab in {0,2,4,6,3,1,5 }
        \draw (0.25,-\y*\step+0.05) node [anchor=east]{\tiny{$v_{\lab}$}};
\foreach[count=\x] \lab in {8,13,17,18,19,12,14,15,16,7,9,10,11}
        \draw (\x*\step-0.05,0.2) node [anchor=north]{\tiny{$v_{\lab}$}};
      \end{tikzpicture}
\end{center}
Let $R$ be an empty tableau having $m$ rows and $n$ columns and whose row and column labels are the same as those for the tableau $T'$.
By applying step 4 to each of the entries of $T'$ and filling in the entries of $R$ accordingly, we have the following tableau and labelling $\mydecrib(T):=(R,\ell)$.

\begin{center}
\begin{tikzpicture}
\def\step{0.43}
\draw (0,0) node [anchor = north west]  {\young(1111100000000,0000100000000,0000100000000,0000111110000,0000000011000,0000000001000,0000000001111)};
\foreach[count=\y] \lab in {0,2,4,6,3,1,5  }
        \draw (0.25,-\y*\step+0.05) node [anchor=east]{\tiny{$v_{\lab}$}};
\foreach[count=\x] \lab in {8,13,17,18,19,12,14,15,16,7,9,10,11}
        \draw (\x*\step-0.05,0.2) node [anchor=north]{\tiny{$v_{\lab}$}};
      \end{tikzpicture}
\end{center}
\end{example}

\begin{example}
Three examples of members of $\EW_{3,2}$ and their images under $\mydecrib$:
\begin{center}
$T_1$=
\begin{tikzpicture}[baseline={([yshift=-.8ex]current bounding box.center)}]
\def\step{0.43}
\draw (0,0) node [anchor = north west]  {\young(11,00,00)};
\foreach[count=\y] \lab in {0,1,2}
 \draw (0.25,-\y*\step+0.05) node [anchor=east]{\tiny{$v_{\lab}$}};
\foreach[count=\x] \lab in {3,4}
        \draw (\x*\step-0.05,0.2) node [anchor=north]{\tiny{$v_{\lab}$}};
\end{tikzpicture} $\longrightarrow$
\begin{tikzpicture}[baseline={([yshift=-.8ex]current bounding box.center)}]
\def\step{0.43}
\draw (0,0) node [anchor = north west]  {\young(11,00,00)};
\foreach[count=\y] \lab in {0,1,2}
        \draw (0.25,-\y*\step+0.05) node [anchor=east]{\tiny{$v_{\lab}$}};
\foreach[count=\x] \lab in {3,4}
        \draw (\x*\step-0.05,0.2) node [anchor=north]{\tiny{$v_{\lab}$}};
      \end{tikzpicture} $\longrightarrow$
  \begin{tikzpicture}[baseline={([yshift=-.8ex]current bounding box.center)}]
\def\step{0.43}
\draw (0,0) node [anchor = north west]  {\young(11,01,01)};
\foreach[count=\y] \lab in {0,1,2}
        \draw (0.25,-\y*\step+0.05) node [anchor=east]{\tiny{$v_{\lab}$}};
\foreach[count=\x] \lab in {3,4}
        \draw (\x*\step-0.05,0.2) node [anchor=north]{\tiny{$v_{\lab}$}};
      \end{tikzpicture}$=\mydecrib(T_1)$\\
$T_2$=
\begin{tikzpicture}[baseline={([yshift=-.8ex]current bounding box.center)}]
\def\step{0.43}
\draw (0,0) node [anchor = north west]  {\young(11,00,01)};
\foreach[count=\y] \lab in {0,1,2}
\draw (0.25,-\y*\step+0.05) node [anchor=east]{\tiny{$v_{\lab}$}};
\foreach[count=\x] \lab in {3,4}
\draw (\x*\step-0.05,0.2) node [anchor=north]{\tiny{$v_{\lab}$}};       
\end{tikzpicture} $\longrightarrow$
\begin{tikzpicture}[baseline={([yshift=-.8ex]current bounding box.center)}]
\def\step{0.43}
\draw (0,0) node [anchor = north west]  {\young(11,01,00)};
\foreach[count=\y] \lab in {0,2,1}
        \draw (0.25,-\y*\step+0.05) node [anchor=east]{\tiny{$v_{\lab}$}};
\foreach[count=\x] \lab in {3,4}
        \draw (\x*\step-0.05,0.2) node [anchor=north]{\tiny{$v_{\lab}$}};
      \end{tikzpicture} $\longrightarrow$
  \begin{tikzpicture}[baseline={([yshift=-.8ex]current bounding box.center)}]
\def\step{0.43}
\draw (0,0) node [anchor = north west]  {\young(10,11,01)};
\foreach[count=\y] \lab in {0,2,1}
        \draw (0.25,-\y*\step+0.05) node [anchor=east]{\tiny{$v_{\lab}$}};
\foreach[count=\x] \lab in {3,4}
        \draw (\x*\step-0.05,0.2) node [anchor=north]{\tiny{$v_{\lab}$}};
      \end{tikzpicture}$=\mydecrib(T_2)$\\
$T_3$=
\begin{tikzpicture}[baseline={([yshift=-.8ex]current bounding box.center)}]
\def\step{0.43}
\draw (0,0) node [anchor = north west]  {\young(11,10,10)};
\foreach[count=\y] \lab in {0,1,2}
\draw (0.25,-\y*\step+0.05) node [anchor=east]{\tiny{$v_{\lab}$}};
\foreach[count=\x] \lab in {3,4}
\draw (\x*\step-0.05,0.2) node [anchor=north]{\tiny{$v_{\lab}$}};
\end{tikzpicture} $\longrightarrow$
\begin{tikzpicture}[baseline={([yshift=-.8ex]current bounding box.center)}]
\def\step{0.43}
\draw (0,0) node [anchor = north west]  {\young(11,01,01)};
\foreach[count=\y] \lab in {0,1,2}
        \draw (0.25,-\y*\step+0.05) node [anchor=east]{\tiny{$v_{\lab}$}};
\foreach[count=\x] \lab in {4,3}
        \draw (\x*\step-0.05,0.2) node [anchor=north]{\tiny{$v_{\lab}$}};
      \end{tikzpicture} $\longrightarrow$
 \begin{tikzpicture}[baseline={([yshift=-.8ex]current bounding box.center)}]
\def\step{0.43}
\draw (0,0) node [anchor = north west]  {\young(10,10,11)};
\foreach[count=\y] \lab in {0,1,2}
        \draw (0.25,-\y*\step+0.05) node [anchor=east]{\tiny{$v_{\lab}$}};
\foreach[count=\x] \lab in {4,3}
        \draw (\x*\step-0.05,0.2) node [anchor=north]{\tiny{$v_{\lab}$}};
      \end{tikzpicture}$=\mydecrib(T_3)$
\end{center}
\end{example}

\begin{theorem}\label{first:injective}
$\mydecrib : \EW_{m,n} \mapsto \DecRib_{m,n}$ is injective.
\end{theorem}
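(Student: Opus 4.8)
The plan is to prove injectivity by showing that $T$ can be recovered from the pair $(R,\ell)=\mydecrib(T)$, reversing the two halves of Definition~\ref{myalg1} in turn: first recover the intermediate tableau $T'$ from the ribbon $R$ alone, and then recover $T$ from $T'$ together with the labelling $\ell$.

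\emph{Recovering $T'$ from $R$.} By the proof of Proposition~\ref{four:two}, the $1$s of $T'$ form a Ferrers shape anchored in the top-right corner, so $T'$ is determined by the sequence $\lambda_1\ge\lambda_2\ge\cdots\ge\lambda_m$, where $\lambda_i$ is the number of $1$s in row $i$ of $T'$; here $\lambda_1=n$ and $\lambda_i\le n-1$ for $i\ge2$ by Definition~\ref{def:EWT}. Step~4 of Definition~\ref{myalg1} puts a $1$ in position $R_{i,j}$ for exactly two reasons: the ``last $0$'' of row $i$ (which sits in column $n-\lambda_i$ when $i\ge2$), and the ``bottom $1$s'' of the Ferrers region lying in row $i$. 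A direct check shows that in each row $i\ge2$ the $1$s of $R$ occupy precisely the contiguous block of columns from $n-\lambda_i$ to $n-\lambda_{i+1}$ (with the convention $\lambda_{m+1}=0$), a block which is always nonempty since $\lambda_i\ge\lambda_{i+1}$. Hence the column of the leftmost $1$ in row $i$ of $R$ equals $n-\lambda_i$ for every $i\ge2$; together with $\lambda_1=n$ this recovers $\lambda_1,\dots,\lambda_m$ and therefore $T'$. In other words, the assignment $T'\mapsto R$ effected by Step~4 is injective on tableaux whose $1$-region is a top-right Ferrers shape.

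\emph{Recovering $T$ from $(T',\ell)$.} By construction, Steps~1--2 only permute rows and columns, and $\ell$ records, for each row (respectively column) of $T'$, the label it carried as a row (respectively column) of the original $T$. Since the rows of $T$ are labelled $v_0,\ldots,v_{m-1}$ from top to bottom and its columns $v_m,\ldots,v_{m+n-1}$ from left to right, $T$ is obtained from $(T',\ell)$ by reordering the rows of $T'$ so their labels read $v_0,v_1,\ldots,v_{m-1}$ downward and the columns so their labels read $v_m,\ldots,v_{m+n-1}$ rightward. Thus $(T',\ell)$ determines $T$ uniquely. Combining the two recoveries: if $\mydecrib(T_1)=\mydecrib(T_2)=(R,\ell)$, then $R$ forces the intermediate tableaux to agree, and then the common $(T',\ell)$ forces $T_1=T_2$.

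The main (indeed only) obstacle is the first step: verifying that the ribbon $R$ ``remembers'' the Ferrers shape of $T'$. Even there the content is modest — one simply has to separate, within a single row of $R$, the cell produced by the ``last $0$'' rule from those produced by the ``bottom $1$'' rule of Step~4, and observe that the former is always the leftmost and lies in column $n-\lambda_i$. The reconstruction from the labelling in the second step is pure bookkeeping, and the degenerate case $m=1$ (where $\EW_{1,n}$ is a single tableau) is immediate.
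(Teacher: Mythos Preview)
Your proof is correct and follows essentially the same approach as the paper: both argue injectivity by inverting Definition~\ref{myalg1} in two stages, first recovering the Ferrers-shaped intermediate tableau $T'$ from the ribbon $R$ (you do this by reading off $\lambda_i$ from the column of the leftmost $1$ in row $i$; the paper gives the equivalent explicit rule ``replace the leftmost $1$ by $0$ and set everything to its right to $1$''), and then undoing the row/column permutation using $\ell$. Your treatment of the first step is in fact slightly more careful than the paper's, which contains a harmless but incorrect parenthetical claim that all entries to the right of the leftmost $1$ in a row of $R$ are $0$s.
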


\begin{proof}
Let $A,B\in \EW_{m,n}$ and set $\mydecrib(A)=(T^{(A)},\ell^{(A)})$ and $\mydecrib (B)=(T^{(B)},\ell^{(B)})$.
Suppose $\mydecrib(A)=\mydecrib(B)$.
Then $(T^{(A)},\ell^{(A)}) = (T^{(B)},\ell^{(B)})$ which implies $T^{(A)}=T^{(B)}$ and $\ell^{(A)}=\ell^{(B)}$.

Suppose $A^{(2)}$ and $B^{(2)}$ are the result of applying Steps 1 and 2 of Definition~\ref{myalg1} to $A$ and $B$, respectively. 
Then $A^{(2)}$ and $B^{(2)}$ are tableaux both having $m$ rows and $n$ columns such that the ones form a Ferrers diagram in the top right of each tableaux, 
and the zeros form a Ferrers diagram in the bottom left of the tableaux.

Let  $R^{(A)}$ and $R^{(B)}$ be the empty tableaux mentioned in Step 3 of Definition~\ref{myalg1}.
Note that they both have $m$ rows and $n$ columns.

The tableaux $R^{(A)}$ and $R^{(B)}$ are filled according to Step 4 of Definition~\ref{myalg1} as it is applied to $A^{(2)}$ and $B^{(2)}$, respectively. 
Completing Step 4 gives us $\mydecrib(A) = R^{(A)} = (T^{(A)},\ell^{(A)})$ and $\mydecrib(B) = R^{(B)} = (T^{(B)},\ell^{(B)})$.

Let us now note that the operation in Step 4 of Definition~\ref{myalg1} is invertible.
This is seen through the following procedure:
in order to recover $A^{(2)}$ from $T^{(A)}$ we do as follows:
\begin{itemize}
\item In $T^{(A)}$, change every 0 in the first row to a 1.
\item For each of the other rows of $T^{(A)}$: replace the leftmost 1 with a 0 and change all entries 
to its right 
(which, by definition, are all 0s) 
to 1.
\item The outcome of doing this produces $A^{(2)}$.
\end{itemize}
Since $T^{(B)}=T^{(A)}$, by assumption, applying these same rules to $T^{(B)}$ will result in the same tableau $B^{(2)} = A^{(2)}$.

So in assuming that $\mydecrib(A) = \mydecrib(B)$, we must have $A^{(2)}=B^{(2)}$. 
In the original construction, Definition~\ref{myalg1}, in order to get from $A$ to $A^{(2)}$ we permuted the rows and columns of $A$ with respect to $\ell^{(A)}$.
In order to recover $A$ from $A^{(2)}$ we reorder the rows and columns of $A^{(2)}$ with respect to the inverse of the permutation of labels given by $\ell^{(A)}$.
Since, by assumption, $\ell^{(A)}=\ell^{(B)}$, and combining this with the (derived) fact $A^{(2)}=B^{(2)}$, doing the same to $B^{(2)}$ will result in the same tableau $A$.

This implies that $A=B$ and so the mapping $\mydecrib$ is injective.
\end{proof}

\begin{theorem} \label{first:surjective}
$\mydecrib : \EW_{m,n} \mapsto \DecRib_{m,n}$  is surjective.
\end{theorem}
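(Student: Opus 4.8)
The plan is to construct an explicit inverse map $\Phi\colon \DecRib_{m,n}\to\EW_{m,n}$ and show that $\mydecrib\circ\Phi$ is the identity on $\DecRib_{m,n}$; combined with the injectivity established in Theorem~\ref{first:injective}, this gives surjectivity. The construction of $\Phi$ is essentially the reversal of the four steps of Definition~\ref{myalg1}, and a key observation (already isolated in the proof of Theorem~\ref{first:injective}) is that Step~4 is invertible: given a labelled ribbon parallelogram polyomino $(R,\ell)\in\DecRib_{m,n}$, one first recovers the ``staircase'' tableau $T'$ whose $1$s form a Ferrers shape in the top-right corner by the recipe already described there (change every $0$ in the first row to a $1$; in every other row replace the leftmost $1$ by a $0$ and change the entries to its right from $0$ to $1$).

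First I would verify that this recipe, applied to an arbitrary $(R,\ell)\in\DecRib_{m,n}$, actually produces a $0/1$ tableau $T'$ whose $1$-region is a Ferrers shape anchored at the top-right, with the top row all $1$s and every other row containing a $0$. This uses the defining properties of ribbon parallelogram polyominoes (Definition~\ref{def:dpp} together with the ribbon condition: the shape has exactly $m+n-1$ cells, begins at $(1,1)$, ends at $(m,n)$, and each row/column of the ribbon is contiguous and shifts weakly in the prescribed directions). The boundary of the ribbon is a lattice path from top-left to bottom-right, and ``unshifting'' it by the Step~4 recipe yields precisely the Ferrers boundary; the minimality/ribbon condition is exactly what guarantees that each row below the first has a $0$ (equivalently, the Ferrers shape does not fill an entire row). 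Then I would apply the inverse of Steps~1 and~2: permute the rows and columns of $T'$ according to $\ell^{-1}$ so that the row labels are restored to the order $v_0,\dots,v_{m-1}$ from top to bottom and the column labels to $v_m,\dots,v_{m+n-1}$ from left to right. Call the resulting tableau $\Phi(R,\ell)=:T$.

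Next I would check that $T\in\EW_{m,n}$. Properties (i) and (ii) of Definition~\ref{def:EWT} are preserved because they are invariant under row/column permutation and were verified for $T'$: the unique all-$1$ row stays all-$1$ and becomes the top row precisely because its label is $v_0$ and property (iii) of Definition~\ref{def:dpp} forces $v_0$ to be the label of the top row, which after unshifting is still the all-$1$s row. For property (iii) (no $0,1,1,0$ rectangle), I would use that $T'$ has its $1$s in a Ferrers shape: in any Ferrers filling, for any two rows one row's $1$-set contains the other's, so no two rows can have the forbidden pattern in any pair of columns; this property is also invariant under permuting rows and columns, so $T$ inherits it. Finally I would confirm $\mydecrib(\Phi(R,\ell))=(R,\ell)$: applying Steps~1--2 to $T$ re-sorts its rows and columns back to the Ferrers form, recovering exactly $T'$ together with the labelling $\ell$ (here I need that the tie-breaking rule ``increasing labels on identical adjacent rows/columns'' in Steps~1--2 produces the same order as the one recorded in $\ell$ — this holds because the labelling conditions in Definition~\ref{def:dpp}(ii),(iii) are precisely those tie-breaking conventions), and then Step~4 applied to $T'$ reproduces $R$ since it is the two-sided inverse of the unshifting recipe.

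The main obstacle I anticipate is the bookkeeping around the tie-breaking conventions: I must show that the ordering of equal columns (resp. rows) dictated by Steps~1 and~2 of Definition~\ref{myalg1} matches exactly the ordering forced on $\ell$ by conditions (ii) and (iii) of Definition~\ref{def:dpp}, so that no information is lost or altered in the round trip. Concretely, columns of $T'$ that are identical correspond to columns of $R$ whose topmost $1$s are at the same height, and rows of $T'$ that are identical correspond to rows of $R$ whose leftmost $1$s are the same distance from the left edge; once this correspondence is pinned down, the matching of the ``increasing label'' conditions is immediate, but stating it carefully is where the care is needed. A secondary point requiring attention is confirming that every $(R,\ell)\in\DecRib_{m,n}$ really does arise from a Ferrers-shaped $T'$ via Step~4 — i.e. that the unshifting recipe never fails (never tries to replace a non-existent leftmost $1$, etc.) — which again reduces to the structural properties of ribbon parallelogram polyominoes.
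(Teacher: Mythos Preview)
Your proposal is correct and follows essentially the same approach as the paper: both construct an explicit preimage by first inverting Step~4 (via the same ``unshifting'' recipe) to obtain a Ferrers-shaped tableau, then permuting rows and columns back to the standard label order. Your treatment is in fact more thorough than the paper's, since you explicitly verify that the resulting tableau lies in $\EW_{m,n}$ and you address the tie-breaking compatibility between Steps~1--2 and Definition~\ref{def:dpp}(ii),(iii), points the paper leaves implicit.
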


\begin{proof}
Given $(T,\ell)\in \DecRib_{m,n}$, define the $m\times n$ tableau 
\begin{align*}
B_{ij} := &
	\begin{cases}
	1 & \mbox{ if } (i,j)=(1,1), \\
	1 & \mbox{ if there exists } k<j \mbox{ s.t. } T_{ik}=1, \\
	0 & \mbox{ otherwise.}
	\end{cases}
\end{align*}
Let $A$ be the tableau with $m$ rows and $n$ columns that is achieved through the following simple operation on $(B,\ell)$:
\begin{itemize}
\item permute the columns of $(B,\ell)$ so that the column labels are increasing from left to right
\item permute the rows of $(B,\ell)$ so that the row labels are increasing from top to bottom.
\end{itemize}
Now let us consider $\mydecrib(A)$. 
By Definition~\ref{myalg1}, to construct $\mydecrib(A)$ we first apply Steps 1 and 2 to $A$. 
We achieve a tableau $A^{(2)}$ such that there are only zeros to the left of every zero, and ones above every one. 
In order to recover $A$ from $A^{(2)}$ we reorder the rows and columns of $A^{(2)}$ with respect to the inverse of the permutation of labels given by $\ell$.
Since, by assumption, we get $A$ from $B$. Therefore $A^{(2)}=B$.

According to Step 4 of Definition~\ref{myalg1}, when it is is applied to $B$ it will result in $(T,\ell)$. 
The reason for this is because Step 4 of Definition~\ref{myalg1} is invertible. 
The following observation shows this: in order to recover $A^{(2)}$ from $T$ we do as follows:
\begin{itemize}
\item In $T$, change every 0 in the first row to a 1.
\item For every other row of $T$: change the leftmost 1 to 0 and simultaneously change all 0s to the right of that leftmost 1 to 1.
\item The outcome of doing this gives $A^{(2)}$.
\end{itemize}
This implies that there exists $A\in \EW_{m,n}$ such that $\mydecrib(A)=(T,\ell)$. 
\end{proof}

\begin{example}
Let $(T,\ell)$ be the labelled ribbon polyomino on the left in Figure~\ref{amal:fig:one}.
The tableau in the centre is constructed from $(T,\ell)$ by changing every 0 in the first row to a 1, 
then in all other rows change every leftmost 1 to 0 and simultaneously change all 0s to the right of that 1 to 1. 
The $EW$ tableau on the right is constructed by permuting the rows and columns of the centre tableau to be in increasing order from top to bottom and right to left, respectively.
\begin{figure}
\label{amal:fig:one}
\begin{center}
 \begin{tikzpicture}[baseline={([yshift=-.8ex]current bounding box.center)}]
\def\step{0.43}
\draw (0,0) node [anchor = north west]  {\young(11100,00100,00111,00001)};
\foreach[count=\y] \lab in {0,2,3,1}
        \draw (0.25,-\y*\step+0.05) node [anchor=east]{\tiny{$v_{\lab}$}};
\foreach[count=\x] \lab in {4,6,8,5,7}
        \draw (\x*\step-0.05,0.2) node [anchor=north]{\tiny{$v_{\lab}$}};
      \end{tikzpicture}$\longrightarrow$ \begin{tikzpicture}[baseline={([yshift=-.8ex]current bounding box.center)}]
\def\step{0.43}
\draw (0,0) node [anchor = north west]  {\young(11111,00011,00011,00000)};
\foreach[count=\y] \lab in {0,2,3,1}
        \draw (0.25,-\y*\step+0.05) node [anchor=east]{\tiny{$v_{\lab}$}};
\foreach[count=\x] \lab in {4,6,8,5,7}
        \draw (\x*\step-0.05,0.2) node [anchor=north]{\tiny{$v_{\lab}$}};
      \end{tikzpicture}$\longrightarrow$\begin{tikzpicture}[baseline={([yshift=-.8ex]current bounding box.center)}]
\def\step{0.43}
\draw (0,0) node [anchor = north west]  {\young(11111,00000,01010,01010)};
\foreach[count=\y] \lab in {0,1,2,3}
        \draw (0.25,-\y*\step+0.05) node [anchor=east]{\tiny{$v_{\lab}$}};
\foreach[count=\x] \lab in {4,5,6,7,8}
        \draw (\x*\step-0.05,0.2) node [anchor=north]{\tiny{$v_{\lab}$}};
      \end{tikzpicture}
\end{center}
\caption{An example of an element of $\DecRib_{4,5}$ (left) and its corresponding $\EW_{4,5}$-tableau (right).
}
\end{figure}
\end{example}
Theorems~\ref{first:injective} and \ref{first:surjective} can now be combined to yield:
\begin{theorem}\label{three:eight}
$\mydecrib : \EW_{m,n} \mapsto \DecRib_{m,n}$ is a bijection.
\end{theorem}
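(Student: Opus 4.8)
The statement is exactly the conjunction of the two preceding results, so the plan is to assemble them. Proposition~\ref{four:two} already records that $\mydecrib$ is a well-defined map $\EW_{m,n} \to \DecRib_{m,n}$: Steps 1 and 2 of Definition~\ref{myalg1} turn any $T \in \EW_{m,n}$ into a tableau $T'$ whose $1$s form a Ferrers shape anchored in the top-right corner, the induced labelling $\ell$ satisfies the constraints of Definition~\ref{def:dpp}(ii)--(iii) (with the unique all-$1$s row keeping the label $v_0$), and Step 4 records a shifted copy of the $0/1$ boundary, which is a ribbon polyomino from $(1,1)$ to $(m,n)$; hence $(R,\ell) \in \DecRib_{m,n}$. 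Then I would simply invoke Theorem~\ref{first:injective} for injectivity and Theorem~\ref{first:surjective} for surjectivity, and conclude that $\mydecrib$ is a bijection by the standard equivalence ``injective $+$ surjective $=$ bijective.''

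I would also point out, for the reader, the structural reason the two halves fit together: the construction in the surjectivity proof (form $B$ from $(T,\ell)$ by the prefix rule $B_{ij}=1$ iff $(i,j)=(1,1)$ or $T_{ik}=1$ for some $k<j$, then re-sort rows and columns into increasing label order) is in fact a genuine two-sided inverse of $\mydecrib$. Indeed, Step 4 of Definition~\ref{myalg1} is invertible entry-by-entry (fill the first row with $1$s, and in every other row slide the leftmost $1$ into a $0$ while turning the $0$s to its right into $1$s), and the row/column reorderings of Steps 1--2 are fully determined by the recorded labelling $\ell$, which is why $\ell$ is carried along as part of the output. So an equally valid route to the theorem is a direct verification that this map $\Psi$ satisfies $\mydecrib \circ \Psi = \mathrm{id}$ on $\DecRib_{m,n}$ and $\Psi \circ \mydecrib = \mathrm{id}$ on $\EW_{m,n}$; Theorems~\ref{first:injective} and \ref{first:surjective} essentially carry out these two checks separately.

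I do not expect any real obstacle here. The only delicate point, already dealt with in the earlier proofs, is that distinct rows or distinct columns of $T$ may become identical after the Ferrers-shape rearrangement; the tie-breaking rule (increasing labels among equal adjacent rows/columns) makes the permutations of Steps 1--2 canonical, so the labelling $\ell$ records exactly the information needed to undo them. Granting that, the combination of Theorems~\ref{first:injective} and \ref{first:surjective} immediately yields that $\mydecrib : \EW_{m,n} \to \DecRib_{m,n}$ is a bijection.
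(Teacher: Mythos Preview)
Your proposal is correct and follows exactly the paper's approach: the paper's proof of this theorem is a single sentence stating that Theorems~\ref{first:injective} and \ref{first:surjective} combine to give the result. Your additional remarks about the explicit two-sided inverse anticipate Definition~\ref{myalg2}, which the paper records immediately after the theorem.
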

\newcommand{\mydecribinv}{\psi}
The inverse of the bijection $\mydecrib$ is the following:
\begin{definition}\label{myalg2}\ \\
Let $(R,\ell) \in \DR_{m,n}$. Let $T'$ be a tableau having the same dimensions as $R$ with
$$T'_{ij} = 
	\begin{cases}
	1 & \mbox{ if } (i,j)=(1,1), \\
	1 & \mbox{ if there exists } k<j \mbox{ s.t. } R_{ik}=1, \\
	0 & \mbox{ otherwise.}
	\end{cases}
		$$
Let $T$ be the tableau one gets from $T'$ by performing the following simple operations:
permute the columns of $T'$ so that the labels $(v_{m},\ldots,v_{m+n-1})$ are increasing from left to right 
and permute the rows of $T'$ so that labels $(v_{0},\ldots,v_{m-1})$ are increasing from top to bottom.
Let the outcome of this procedure be $\mydecribinv (R,\ell) := T$.
\end{definition}

\begin{example}
Let $(R,\ell)$ be the following labelled ribbon parallelogram polyomino:
\\
\begin{center}
\EWdiagramlab{1100,0111,0001}{0,2,1}{4,6,3,5}{$(R,\ell)=$}
\end{center}
Apply the rule to construct $T'$:\\
\centerline{
\EWdiagramlab{1111,0011,0000}{0,2,1}{4,6,3,5}{$T'=$}
}
Permute the columns of $T'$ so that the column labels are increasing from left to right $(v_{3},v_{4},v_{5},v_{6})$, 
and permute the rows of $T'$ so that the row labels are increasing from top to bottom $(v_{0},v_{1},v_{2})$ to get the following member of $\EW_{3,4}$:\\
\centerline{
\EWdiagramlab{1111,0000,1010}{0,1,2}{3,4,5,6}{$T=$}
}
\end{example}

\mynewpage
\section{A bijection from $\MEW_{m,n}$ to $\DecPara_{m,n}$}

\newcommand{\surplus}{\mathsf{surp}}
\newcommand{\surp}{\surplus}
\newcommand{\lrib}[1]{\langle #1 \rangle}

Before we present the main bijection in this section, we need to do two things.
The first is to introduce an alternative notation for specifying labelled parallelogram polyominoes.
The second is to show the reader how to `see' non-cornersupport entries in an EW-tableaux.

\subsection{Expanding labelled ribbon polyominoes}
When introduced in Section 2, labelled parallelogram polyominoes were specified as a pair $(T,\ell)$ where 
$T$ is an unlabelled parallelogram polyomino, and $\ell$ is a labelling of its rows and columns that respects a simple convention given 
in Definition~\ref{def:dpp}.

The alternative notation we will now use is as follows.
We will specify a labelled ribbon parallelogram polyomino $D$ as a pair $\lrib{D',a}$ 
where $D'$ is the labelled ribbon parallelogram polyomino 
that is the bounce path of $D$, and $a$ is a sequence that details how many cells to expand the bounce path by in all rows and columns in order to expand the ribbon into a parallelogram polyomino.
There are two important points to be made in relation to this notation:

\begin{itemize}
\item
The number of cells one may add to individual rows and columns of a labelled ribbon parallelogram polyomino is, of course, limited. 
We cannot add so many cells to a column or row that the underlying bounce path is different. 
(This would destroy the uniqueness property of the notation.)
Thus certain integer sequences `support' a labelled ribbon parallelogram polyomino in that they preserve the bounce path.
\item Once cells are added to a row or column, it may not immediately resemble a parallelogram polyomino.
There is no harm in this since the addition of such cells will partition those rows/columns are detailed in the second and third points of Definition~\ref{def:dpp}.
To overcome this we simply shuffle these contiguous rows and columns so that `increasing' property is satisfied.
\end{itemize}

\begin{proposition}\label{dp:to:dr}
Let $D=(T,\ell) \in \DecPara_{m,n}$ with $\ell = (\ell_1,\ldots,\ell_{m+n-1})$.
Then $D$ can be uniquely written as a pair $\lrib{D',\surplus}$ where 
$D'=(\bounce(T),\ell ') \in \DecRib_{m,n}$ is a labelled ribbon parallelogram polyomino and 
$\surplus = (\surplus_1,\ldots,\surplus_{m+n-1})$ is a sequence detailing the number 
of cells to be appended to a row/column in $D'$ in order to achieve to achieve $D$.
There are two cases depending on whether the surplus value refers to a row or column label:
\begin{description}
\item[Case $i<m$] Let $x$ be the row of $T$ such that $\ell_x = v_i$.  
Let $y$ be the index such that $T_{xy}$ is the leftmost 1 in that row in $T$.
Let $x'$ be such that $\ell'_{x'}=v_i$ and let $y'$ be the index such 
that $\bounce(T)_{x'y'}$ is the index of the leftmost 1 in that row in $\bounce(T)$.
Define $\surplus_i := x'-x$. This number represents the number of cells to be added to the left of $\bounce(T)_{x'y'}$.
\item[Case $i\geq m$] Let $x$ be the column of $T$ such that $\ell_x=v_i$.
Let $y$ be the index such that $T_{yx}$ is the topmost 1 in that column in $T$.
Let $x'$ be such that $\ell'_{x'} = v_i$ and let $y'$ be the index such 
that $\bounce(T)_{y'x'}$ is the index of the topmost 1 in that column in $\bounce(T)$.
Define $\surplus_i := y'-y$.
This number represents the number of cells to be added above $\bounce(T)_{y'x'}$.
\end{description}
\end{proposition}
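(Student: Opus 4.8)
The proposition asserts two things: (1) that any labelled parallelogram polyomino $D=(T,\ell)$ can be written in the form $\lrib{D',\surp}$ with the stated data, and (2) that this representation is \emph{unique}. I would organise the proof around these two claims, treating existence first (via the explicit recipe given in the statement, whose output I must verify lies in $\DecRib_{m,n}$ and genuinely reconstructs $D$) and uniqueness second (by showing that the bounce path and the surplus vector are forced).

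\emph{Existence.} First I would observe that $\bounce(T)\in\Rib_{m,n}$ by the definition of $\bounce$, and that the restriction of the labelling $\ell$ to rows and columns, possibly after the reshuffling of contiguous equal-height columns / equal-indentation rows described in the remarks above, yields a legitimate labelling $\ell'$ satisfying Definition~\ref{def:dpp}; I would need to check that reshuffling does not disturb the increasing convention, which follows because within a block of columns of equal topmost-$1$ height in $\bounce(T)$ the labels are just a permutation of the corresponding block's labels in $T$ and we sort them increasingly. So $D'=(\bounce(T),\ell')\in\DecRib_{m,n}$. Next, for a fixed row label $v_i$ ($i<m$), I locate the row $x$ of $T$ carrying that label and the row $x'$ of $\bounce(T)$ carrying it; since passing from $T$ to $\bounce(T)$ only deletes cells and never changes which columns a given row's leftmost $1$ can legally occupy, and since the bounce path shares cells $(1,1)$ and $(m,n)$ with $T$, one checks $x'\ge x$ (the bounce path's leftmost $1$ in a row is weakly right of $T$'s, and the row carrying label $v_i$ sits weakly lower after contraction — this is the monotonicity fact to nail down). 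Hence $\surp_i=x'-x\ge 0$, and appending $\surp_i$ cells to the left of the leftmost $1$ of row $x'$ in $\bounce(T)$, then re-sorting, recovers exactly the $1$-pattern of row $x$ in $T$. The analogous argument handles $i\ge m$. Doing this simultaneously for all labels reconstructs $T$ from $\bounce(T)$, establishing existence.

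\emph{Uniqueness.} Suppose $D=\lrib{D'',\surp''}$ for some $D''=(S,\ell'')\in\DecRib_{m,n}$ and nonnegative integer sequence $\surp''$ supporting it. The key point is that expanding a ribbon to a parallelogram polyomino by appending cells to rows and columns (as regulated by the two bullet-point remarks) never changes the bounce path: the bounce path of the expanded object equals the original ribbon. This is essentially the content of the $\bounce$ definition — the bounce path is the "inner staircase" that is invariant under the permissible expansions. Therefore $S=\bounce(T)$, and the labelling $\ell''$ must agree with $\ell'$ up to the reshuffling convention, hence $\ell''=\ell'$, so $D''=D'$. Finally, given that $D'=D''$ is forced, the surplus value $\surp_i''$ for each label is pinned down: it must equal the difference between where the leftmost (resp.\ topmost) $1$ sits in $T$ and where it sits in $\bounce(T)$ for that labelled row (resp.\ column), which is exactly $\surp_i$ as defined. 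Hence $\surp''=\surp$ and the representation is unique.

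\emph{Main obstacle.} The routine part is the bookkeeping with indices and the re-sorting within contiguous blocks. The genuinely substantive step — and the one I would spend the most care on — is the monotonicity/invariance claim: that the permissible cell-additions preserve the bounce path, equivalently that $\bounce$ is a left inverse to this expansion operation, and that the row/column of a given label can only move weakly "outward" (down, or right) when passing from $T$ to $\bounce(T)$ so that each $\surp_i$ is nonnegative. This requires unwinding Definition~\ref{def:dpp}(iv)--(v) together with the definition of $\bounce$ carefully; once it is in hand, both existence and uniqueness follow quickly.
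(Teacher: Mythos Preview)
Your plan is correct and follows the same underlying idea as the paper, namely that a labelled parallelogram polyomino is determined by its labelled bounce path together with the extra cells in each row and column. However, the paper's own ``proof'' is a single paragraph that simply records this observation and does not verify any of the details you flag; it treats the proposition as essentially definitional. Your proposal is therefore considerably more careful than what the authors themselves supply: you separate existence and uniqueness, you note that the labelling $\ell'$ on $\bounce(T)$ requires a reshuffle within blocks to satisfy Definition~\ref{def:dpp}, and you correctly isolate the one substantive fact---that the permissible cell-additions never change the bounce path, so $\bounce$ is a left inverse to expansion---which is what makes both existence and uniqueness go through.

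One small caution: your discussion of nonnegativity of $\surp_i$ wavers between row indices and column indices of leftmost $1$'s. The quantity that is manifestly nonnegative is the difference in \emph{column} positions of the leftmost $1$ in the row labelled $v_i$ (the bounce path's leftmost $1$ is weakly right of $T$'s in the same-labelled row), which is what the example following the proposition actually computes; the formula ``$\surp_i:=x'-x$'' in the statement appears to be a notational slip for $y'-y$. Your parenthetical ``the row carrying label $v_i$ sits weakly lower after contraction'' is not in general true and is not the inequality you need---stick with the column-of-leftmost-$1$ comparison, which is the content of $\bounce(T)\subseteq T$.
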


\begin{proof}
The construction in this proposition uses the observation that, 
while a labelled parallelogram polyomino is a parallelogram polyomino complete with a labelling of its rows and columns that obeys the labelling convention of Def.~\ref{def:dpp},
one may first specify the labelled bounce path (itself a labelled ribbon parallelogram polyomino) of the labelled parallelogram polyomino. 
The only extra information is the number of cells by which to extend every row and column 
while ensuring the bounce path of the parallelogram polyomino remains unchanged.
\end{proof}

\begin{example}\label{decpara:to:decrib}
Let us consider the labelled parallelogram polyomino where the underlying parallelogram polyomino is given in Example~\ref{pararib:example}.
We have:
\begin{center}
\begin{tikzpicture}
\def\step{0.43}
\begin{scope}[xshift=0cm, yshift=0cm]
\draw (0,0) node [anchor = north west]  {\young(1100,1100,0110,0111,0011,0011)};
\foreach[count=\y] \lab in {0,4,1,3,2,5}
        \draw (0.25,-\y*\step+0.05) node [anchor=east]{\tiny{$v_{\lab}$}};
\foreach[count=\x] \lab in {6,8,9,7}
        \draw (\x*\step-0.05,0.2) node [anchor=north]{\tiny{$v_{\lab}$}};
\draw[left] (-0.22,-1.5) node {$D=(T,\ell)=$};
\draw (3,-1.5) node {$\in \DecPara_{6,4}$;};
\end{scope}
\begin{scope}[xshift=9cm, yshift=0cm]
\draw (0,0) node [anchor = north west]  {\young(1100,0100,0100,0111,0001,0001)};
\foreach[count=\y] \lab in {0,1,3,4,2,5}
        \draw (0.25,-\y*\step+0.05) node [anchor=east]{\tiny{$v_{\lab}$}};
\foreach[count=\x] \lab in {6,8,7,9}
        \draw (\x*\step-0.05,0.2) node [anchor=north]{\tiny{$v_{\lab}$}};
\draw[left] (-0.22,-1.5) node {$D'=(\bounce(T),\ell')=$};
\draw (3,-1.5) node {$\in \DecRib_{6,4}$};
\end{scope}
\end{tikzpicture}
\end{center}
Here $D=(T,\ell)$ where $\ell=(v_4,v_1,v_3,v_2,v_5,v_6,v_8,v_9,v_7)$ and the corresponding labelled ribbon parallelogram polyomino is 
$D'=(\bounce(T),\ell')$ where $\ell'=(v_1,v_3,v_4,v_2,v_5,v_6,v_8,v_7,v_9)$.
We may now write $D$ as the pair $D=\lrib{D',\surplus}$ where
\begin{align*}
\surplus=& (2-2,4-3,2-2,2-1,4-3,1-1,4-4,1-1,4-3)\\
=& (0,1,0,1,1,0,0,0,1).
\end{align*}
\end{example}

\subsection{How to discern non-cornersupport entries in an EW-tableau}\label{see:ncs}

Given a large rectangular EW-tableaux, it may seem quite a daunting task to decide which entries are non-cornersupport using only Definition~\ref{def:cornersupport}. 
To overcome this, we will illustrate a simple transformation of a rectangular tableaux so that all non-cornersupport entries can be immediately discerned.
In order to do this, we will make use of the observation that an entry in a rectangular EW-tableau is a non-cornersupport entry iff it is a non-cornersupport entry of a tableau that results from permuting the rows and columns in any order.

Given an EW-tableau $T \in \EW_{m,n}$, let $T'$ be the result of applying Steps 1 and 2 of Definition~\ref{myalg1} to $T$.
A more descriptive way to say this is that $T'$ is the reordering of rows and columns so that the 1s of the tableau form a Ferrers shape whose corner is at the top right of the tableau.

\begin{example}
Consider the tableau $T$ from Example~\ref{four:three}.
\begin{center}
\begin{tikzpicture}
\def\step{0.43}
\draw (0,0) node [anchor = north west]  {\young(1111111111111,0011100000000,1011110111000,1011100000000,1011110111000,0011100000000,1011110111000)};
\foreach[count=\y] \lab in {0,1,2,3,4,5,6 }
        \draw (0.25,-\y*\step+0.05) node [anchor=east]{\tiny{$v_{\lab}$}};
\foreach[count=\x] \lab in {7,8,9,10,11,12,13,14,15,16,17,18,19}
        \draw (\x*\step-0.05,0.2) node [anchor=north]{\tiny{$v_{\lab}$}};
      \end{tikzpicture}
\end{center}
Permute the rows and columns as outlined in Steps 1 and 2 of Definition~\ref{myalg1} to get:

\centerline{\EWdiagramlab{1111111111111,0000011111111,0000011111111,0000011111111,0000000001111,0000000000111,0000000000111}{0,2,4,6,3,1,5}{8,13,17,18,19,12,14,15,16,7,9,10,11}{$T'=$}}

A 1 is a cornersupport one if there exists a sub-square such that there is a non-attacking 0 and the two other entries are 1s.
With this reordered tableau, such a 0 would necessarily be south west of the 1.
Consider the 1 at position $(v_0,v_7)$. 
We can see that there is a 1 to its left and position $(v_0,v_{19})$ and a 1 beneath it at position $(v_3,v_7)$.
The remaining entry in the square induced by these three 1s is $T'(v_3,v_{19})=0$. Thus the entry $T'(v_0,v_7)=1$ is a cornersupport 1. 
Any 1s to the left of that one will be cornersupport ones for that same reason.

However, if we examine the entry $T'(v_2,v_{14})=1$, it is contained in a rectangle of 1s that borders the region of 0s.
In other words, if we select any 1 to its left and any 1 below it, then the missing entry in the square induced by these three entries is also a 1. So $T'(v_2,v_{14})=1$ is a non-cornersupport 1.

Thus 1s in these special rectangles that border the 0 region are non-cornersupport 1s. Precisely the same is true of 0s that are close to the diagonal, and the non-cornersupport 1s and 0s are highlighted in the shaded region in the following diagram:

\begin{center}
\begin{tikzpicture}
  \def\minix{0.21}
  \def\miniy{0.21}
\def\xx{0.435}
\def\bx{0}\def\by{0}
\def\initx{0.28}\def\inity{0.37}
\def\jnitx{0.31}\def\jnity{0.47}
\def\stepx{0.43}\def\stepy{0.43}
\fill[black!30!white] (0.14,-0.14) rectangle (0.14+5*0.431,-0.14-4*0.43);
\fill[black!30!white] (0.14+5*0.431,-0.14-0.43) rectangle (0.14+9*0.431,-0.14-5*0.43);
\fill[black!30!white] (0.14+9*0.431,-0.14-4*0.431) rectangle (0.14+10*0.431,-0.14-7*0.43);
\fill[black!30!white] (0.14+10*0.431,-0.14-5*0.431) rectangle (0.14+13*0.431,-0.14-7*0.43);
\EWdiagramlabwithin{1111111111111,0000011111111,0000011111111,0000011111111,0000000001111,0000000000111,0000000000111}{0,2,4,6,3,1,5}{8,13,17,18,19,12,14,15,16,7,9,10,11}{$T'=$}
\end{tikzpicture}
\end{center}
We can now use this labelled diagram to shade in the corresponding 1s and 0s in $T$.
One other important fact regarding this transformation is that the order in which pairs of non-cornersupport 1s (resp. 0s) appear in relation to one another from left to right (resp. top to bottom) is preserved.
\end{example}

\subsection{A bijection from $\MEW$ to $\DecPara$}
\newcommand{\mydecpara}{\Phi}
With this new notation for labelled parallelogram polyominoes, we are now in a position to define the main bijection of this section.
Given a marked rectangular EW-tableau $M=(T,a) \in \MEW_{m,n}$, 
we will now show how to construct a unique labelled parallelogram polyomino $D=\mydecpara(M)$.

\begin{definition}\label{map:one}
Let $M=(T,a) \in \MEW_{m,n}$ and let $\mydecrib(T)=(R,\ell)$, where $\mydecrib$ is given in Definition~\ref{myalg1}.
Define $D=\mydecpara(M) := \lrib{\mydecrib(T),\zeta}$ where $\zeta=(\zeta_1,\ldots,\zeta_{m+n-1})$ and $\zeta_i=\eta_i(T)-a_i-1$.
\end{definition}

\begin{example}\label{second:last}
Consider the following marked EW-tableaux
\begin{center}
\EWdiagramlab{1\os1111\os111\os\os\os,\zs01\os100000000,10111101\os100\zs,\os011100\zs00000,101111\zs\os1\os000,\zs0\os1100000000,10111\os01110\zs0}{0,1,2,3,4,5,6}{7,8,9,10,11,12,13,14,15,16,17,18,19}{$M=(T,a)=$}
\end{center}
Here $a=(0,4,1,1,0,3,0,0,1,0,1,2,0,1,0,1,0,0,0)$.
The corresponding labelled ribbon parallelogram polyomino is
\begin{center}
\EWdiagramlab{1111100000000,0000100000000,0000100000000,0000111110000,0000000011000,0000000001000,0000000001111}{0,2,4,6,3,1,5}{8,13,17,18,19,12,14,15,16,7,9,10,11}{$D'=(R,\ell)=\mydecrib(T)=$}
\end{center}
The vector $\eta(T) = (1,5,4,5,1,5,1,1,2,2,2,3,1,3,3,3,1,1,1)$.
Thus the vector $\zeta = \eta(T) - a - 1$ is 
$$\zeta = (0,0,2,3,0,1,0,0,0,1,0,0,0,1,2,1,0,0,0) .$$
We now use the construction for the pair $\lrib{R,\zeta}$ in Proposition~\ref{dp:to:dr} to append extra cells to the bounce path polyomino, and reorder to get a labelled parallelogram polyomino.
Let us illustrate this one step as three different steps so that it is clear. 
First we append extra cells to the left of the leftmost cells in a bounce path. These are illustrated using X's in the following. 
Since $\zeta_4=3$, we add three X cells to the left of the one in row labelled $v_4$.
Next we append extra cells above the topmost cells in a bounce paths. These are illustrated using Y's in the following diagram.
Since $\zeta_{14}=1$ we add one Y above the topmost one in the column having label $v_{14}$. 
\begin{center}
\EWdiagramlab{11111~~~~~~~~,~~~~1~~Y~~~~~,~XXX1~YYY~~~~,~~~X11111~~~~,~~~~~~XX11~~~,~~~~~~~~~1~Y~,~~~~~~~~~1111}{0,2,4,6,3,1,5}{8,13,17,18,19,12,14,15,16,7,9,10,11}{}
\end{center}
Within those contiguous 1s in the bounce path that have the same abscissa, we reorder these so that the number of X's in a row is weakly decreasing from top to bottom. 
We do the same for those contiguous 1s that have the same ordinate in relation to the Y's. 
This achieves the following reordering.
\begin{center}
\EWdiagramlab{11111~~~~~~~~,~XXX1Y~~~~~~~,~~~X1YYY~~~~~,~~~~11111~~~~,~~~~~~XX11~~~,~~~~~~~~~1Y~~,~~~~~~~~~1111}{0,4,6,2,3,1,5}{8,13,17,18,19,15,14,16,12,7,10,9,11}{}
\begin{tikzpicture}
	\draw [white] (-0.30,0) rectangle (0.45,3);
	\draw [->,line width=2pt] (0,1.5) -- +(0.45,0);
\end{tikzpicture} 
\EWdiagramlab{11111~~~~~~~~,~11111~~~~~~~,~~~11111~~~~~,~~~~11111~~~~,~~~~~~1111~~~,~~~~~~~~~11~~,~~~~~~~~~1111}{0,4,6,2,3,1,5}{8,13,17,18,19,15,14,16,12,7,10,9,11}{}
\end{center}
The final step is replacing X's and Y's with 1s. 
This results in the following member of $\DecPara_{7,13}$:
\begin{center}
\EWdiagramlab{1111100000000,0111110000000,0001111100000,0000111110000,0000001111000,0000000001100,0000000001111}{0,4,6,2,3,1,5}{8,13,17,18,19,15,14,16,12,7,10,9,11}{$\mydecpara(M) = $}
\end{center}
\end{example}

\begin{theorem}
$\mydecpara: \MEW_{m,n} \to \DecPara_{m,n}$ is a bijection.
\end{theorem}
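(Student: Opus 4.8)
The plan is to establish that $\mydecpara$ is a bijection by combining the already-proven fact that $\mydecrib$ is a bijection (Theorem~\ref{three:eight}) with the reformulation of labelled parallelogram polyominoes as pairs $\lrib{D',\surplus}$ from Proposition~\ref{dp:to:dr}. The key structural observation is that both the domain $\MEW_{m,n}$ and the codomain $\DecPara_{m,n}$ decompose into two layers: an ``underlying'' layer (an EW-tableau $T$ on one side, a labelled ribbon parallelogram polyomino $D'$ on the other) and a ``numerical'' layer (the marking data $a$, resp.\ the surplus sequence $\surplus$), and that $\mydecpara$ respects this decomposition. So the strategy is: (1) show the underlying layers correspond bijectively via $\mydecrib$; (2) show that, for a fixed underlying object, the admissible numerical data on each side are in bijection via the affine map $a_i \mapsto \zeta_i = \eta_i(T) - a_i - 1$; (3) conclude.

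First I would make precise the two-layer structure on the codomain. By Proposition~\ref{dp:to:dr}, every $D \in \DecPara_{m,n}$ is uniquely $\lrib{D',\surplus}$ with $D' \in \DecRib_{m,n}$ and $\surplus$ a sequence that is \emph{admissible} for $D'$, meaning $\surplus_i$ does not exceed the maximum number of cells one may prepend to row/column $v_i$ without altering the bounce path $D'$. Conversely every such admissible pair yields a member of $\DecPara_{m,n}$. So it suffices to show $\mydecpara$ induces a bijection from $\{(T,a) : T \in \EW_{m,n},\ a \text{ a valid marking}\}$ onto $\{(D',\surplus) : D' \in \DecRib_{m,n},\ \surplus \text{ admissible for } D'\}$. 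Since $\mydecpara(T,a) = \lrib{\mydecrib(T), \zeta}$ with $\zeta_i = \eta_i(T) - a_i - 1$, and $\mydecrib : \EW_{m,n} \to \DecRib_{m,n}$ is already a bijection, the whole claim reduces to a \emph{fibrewise} statement: for each fixed $T \in \EW_{m,n}$, the map $a \mapsto \zeta = \eta(T) - a - \mathbf{1}$ is a bijection from the set of valid markings of $T$ onto the set of admissible surplus sequences for $\mydecrib(T)$.

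The fibrewise statement splits coordinatewise. On the domain side, a valid marking assigns to each $i$ a value $a_i \in \{0,1,\ldots,\eta_i(T)-1\}$ (Definition~\ref{mew:def} together with the remark in Subsection~\ref{see:ncs} that at least one non-cornersupport entry exists, so $\eta_i(T) \geq 1$). On the codomain side I must identify, for $D' = \mydecrib(T)$, the maximum admissible surplus $s_i^{\max}$ in row/column $v_i$ and show it equals $\eta_i(T) - 1$; then the affine involution-like map $a_i \mapsto \eta_i(T) - 1 - a_i$ is visibly a bijection of $\{0,\ldots,\eta_i(T)-1\}$ with itself, hence a bijection onto $\{0,\ldots,s_i^{\max}\}$. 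The heart of the matter is therefore the identity
\[
  s_i^{\max} \;=\; \eta_i(T) - 1 \qquad \text{for all } i.
\]
To prove it I would pass through the Ferrers-form tableau $T'$ of Subsection~\ref{see:ncs} (the output of Steps~1--2 of Definition~\ref{myalg1}). In $T'$, by the analysis in that subsection, the non-cornersupport $1$s of a column $v_i$ ($i \geq m$) are exactly those lying in the maximal rectangle of $1$s bordering the $0$-region along that column, and the non-cornersupport $0$s of a row $v_i$ ($i < m$) are exactly those in the analogous rectangle of $0$s along the diagonal; the \emph{number} of them is $\eta_i(T)$ because that count is invariant under row/column permutations. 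On the other hand $\mydecrib(T) = (R,\ell)$ is obtained from $T'$ by recording the shifted $0/1$ boundary, and one checks directly from Step~4 that the length of the ``free stretch'' of the bounce path $R$ in direction $v_i$ — i.e.\ the number of cells by which that row/column of $R$ can be extended before the bounce path changes — is precisely (number of non-cornersupport entries in that line of $T'$) $-\,1$, since one of those boundary cells is pinned as the turning cell of the bounce. Translating back through the permutation $\ell$ (which, as noted in Subsection~\ref{see:ncs}, preserves the left-to-right / top-to-bottom order of the relevant entries, so the $k$th marked position matches the $k$th extendable cell) gives $s_i^{\max} = \eta_i(T) - 1$.

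The main obstacle I anticipate is exactly this identity $s_i^{\max} = \eta_i(T)-1$, and more precisely getting the bookkeeping between the two descriptions to line up: the $\eta_i$ are defined combinatorially in terms of cornersupport entries of $T$, whereas $s_i^{\max}$ is a geometric quantity of the bounce path of $\mydecrib(T)$, and the passage between them runs through Step~4 of Definition~\ref{myalg1} and the permutation $\ell$. Once that is nailed down, injectivity and surjectivity of $\mydecpara$ follow formally: given $D = \lrib{D',\surplus} \in \DecPara_{m,n}$, set $T = \mydecrib^{-1}(D') = \mydecribinv(D')$ (using Theorem~\ref{three:eight} and Definition~\ref{myalg2}) and $a_i = \eta_i(T) - 1 - \surplus_i$; admissibility of $\surplus$ gives $0 \le a_i \le \eta_i(T)-1$, so $(T,a) \in \MEW_{m,n}$ and $\mydecpara(T,a) = D$, while uniqueness of this preimage follows from injectivity of $\mydecrib$ together with injectivity of each coordinate map $a_i \mapsto \eta_i(T)-1-a_i$. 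I would also remark that Example~\ref{second:last} serves as a worked consistency check of every step, and in particular of the relation $\zeta = \eta(T) - a - \mathbf{1}$ and of admissibility of the resulting $\zeta$.
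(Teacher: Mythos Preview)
Your proposal is correct and follows essentially the same approach as the paper: both decompose $\mydecpara$ through the $\lrib{\cdot,\cdot}$ representation of Proposition~\ref{dp:to:dr}, invoke the bijection $\mydecrib$ (Theorem~\ref{three:eight}) on the underlying layer, and handle the numerical layer via the affine map $a_i \mapsto \eta_i(T)-a_i-1$. You are in fact more explicit than the paper about the key fibrewise identity $s_i^{\max}=\eta_i(T)-1$, which the paper's proof dispatches tersely by appeal to Subsection~\ref{see:ncs}.
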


\begin{proof} 
The mapping $\mydecpara$ is well-defined. This is seen through the following consideration.
Let $M=(T,a) \in \MEW_{m,n}$ and $D=\mydecpara(T,a)$. 
The first step, that of constructing $\mydecrib(T)$, in making $D$ was shown to be a well defined in Proposition~\ref{four:two}, so $\mydecrib(T) \in \DecRib_{m,n}$.
The second step in constructing $D$ is enlarging $\mydecrib(T)$ with respect to the values given in $\eta$. 
The construction in subsection~\ref{see:ncs} shows that the maximal additions to each row and column that may occur preserve the bounce path.
Thus $\mydecpara(T,a) \in \DecPara_{m,n}$.\ \\[1em]
Let $M_1=(T_1,a)$ and $M_2=(T_2,b)$ be in $\MEW_{m,n}$. 
Let $D_1 = \mydecpara(T_1,a)$ and $D_2 = \mydecpara(T_2,b)$. 
Suppose that $D_1 = D_2$.
From Definition~\ref{map:one} we see that 
$D_1=\lrib{\phi(T_1),\zeta^{(a)}}$ where $\zeta^{(a)}_i = \eta_i(T_1)-a_i-1$.
Also, 
$D_2=\lrib{\phi(T_2),\zeta^{(b)}}$ where $\zeta^{(b)}_i = \eta_i(T_2)-b_i-1$.
As $D_1=D_2$ we must have that $\lrib{\phi(T_1),\zeta^{(a)}} = \lrib{\phi(T_2),\zeta^{(b)}}$. 
The construction $\lrib{\cdot,\cdot}$ has as first argument a unique labelled ribbon parallelogram polyomino, 
and as second argument the information detailing the number of cells to be added to each row/column.
Because of this, we must have $\phi(T_1) = \phi(T_2)$, i.e. the labelled parallelogram polyominoes have the same labelled bounce path
ribbon parallelogram polyominoes. This can happen iff $T_1=T_2$ since $\phi$ is a bijection in its own right.
Since the labels of rows and columns are the same for the underlying tableaux, we therefore must have that the surplus number of cells
added to each is the same, i.e. $a=b$. This implies $M_1=M_2$, which means $\mydecpara$ is injective.
\ \\[1em]
Let $D \in \DecPara_{m,n}$. 
Let $B$ be the labelled bounce path ribbon parallelogram polyomino associated with $D$. That $B$ exists is straightforward: 
every parallelogram polyomino has a bounce path, and such a bounce path may be labelled in a way consistent with $D$.
Then we have $D=\lrib{B,\zeta}$ for some $B \in \DecRib_{m,n}$ and sequence $\zeta$ that supports $B$.
As $B \in \DecRib_{m,n}$, by Theorem~\ref{three:eight}
we must also have that there is a unique $T \in \EW_{m,n}$ such that $\phi(T) = B$.
This consideration shows that there must be a pair $(T,a) \in \MEW_{m,n}$ such that $\mydecpara(T,a) = D$, where $a_i = \eta_i(T) - \zeta_i - 1$, and so $\mydecpara$ is surjective.
\end{proof}

The bijection $\mydecpara$ of Definition~\ref{map:one} admits the following more self-contained presentation:

\begin{definition}\label{map:two}
Let $M=(T,a) \in \MEW_{m,n}$. 
If $v_i$ is a row of $M$, then let $\magic_i$ 
be the number of cornersupport 0s in that row plus the number of non-cornersupport 0s weakly to the left of the marked 0 in that row.
If $v_i$ is a column of $M$, then let $\magic_i$ be the number of cornersupport 1s in that column plus the number of non-cornersupport 1s weakly above the marked 1 in that column.
Define $D=(T',\ell) = \mydecpara(M)$ to be the decorated parallelogram polyomino that results from the following construction:
\begin{enumerate}
\item[(i)]
Let $\pi$ be the lexicographically smallest permutation such that the sequence $(\magic_{\pi(i)})_{i=1}^{m-1}$ is weakly increasing.
Let $\sigma $ be the lexicographically smallest permutation such that the sequence $(\magic_{m-1+\sigma(i)})_{i=1}^{n}$ is weakly increasing.
\item[(ii)] Let $\ell := (v_{\pi(1)},\ldots,v_{\pi(m-1)},v_{m-1+\sigma(i)},\ldots,v_{m-1+\sigma(n)}).$
\item[(iii)] If row $x$ of $D$ has label $v_i$, then the leftmost 1 in this row is in $T'_{x,\magic_{\pi(x)}}$.
\item[(iv)] If column $y$ of $D$ has label $v_i$, then the topmost 1 in this column is in $T'_{\magic_{m-1+\sigma(y)},y}$.
\item[(v)] Complete the interior of $D$ with 1s and insert 0s into any remaining empty cells.
\end{enumerate}
\end{definition}

\begin{example}
Let us consider the marked EW-tableau from Example~\ref{second:last}. We illustrate it here with non-cornersupport entries indicated by shaded cells.
\begin{center}
\begin{tikzpicture}
\def\minix{0.21}
\def\miniy{0.21}
\def\xx{0.435}
\def\bx{0}\def\by{0}
\def\initx{0.28}\def\inity{0.37}
\def\jnitx{0.31}\def\jnity{0.47}
\def\stepx{0.431}\def\stepy{0.431}
\newcommand\myrect[2]{\fill[black!30!white] (0.14+#1*\stepx,-0.14-#2*\stepy) rectangle ++(\stepx,-\stepy);}
\myrect{1}{0} \myrect{6}{0} \myrect{10}{0} \myrect{11}{0} \myrect{12}{0}
\myrect{0}{1} \myrect{2}{1} \myrect{3}{1} \myrect{4}{1} \myrect{1}{2}
\myrect{5}{2} \myrect{6}{2} \myrect{7}{2} \myrect{8}{2} \myrect{9}{2} \myrect{10}{2} \myrect{11}{2} \myrect{12}{2}
\myrect{0}{3} \myrect{5}{3} \myrect{7}{3} \myrect{8}{3} \myrect{9}{3}
\myrect{1}{4} \myrect{5}{4} \myrect{6}{4} \myrect{7}{4} \myrect{8}{4} \myrect{9}{4} \myrect{10}{4}
\myrect{11}{4} \myrect{12}{4} \myrect{0}{5}
\myrect{2}{5} \myrect{3}{5} \myrect{4}{5} \myrect{1}{6} \myrect{5}{6} \myrect{6}{6} \myrect{7}{6}
\myrect{8}{6} \myrect{9}{6} \myrect{10}{6} \myrect{11}{6} \myrect{12}{6}
\EWdiagramlabwithin{1\os1111\os111\os\os\os,\zs01\os100000000,10111101\os100\zs,\os011100\zs00000,101111\zs\os1\os000,\zs0\os1100000000,10111\os01110\zs0}{0,1,2,3,4,5,6}{7,8,9,10,11,12,13,14,15,16,17,18,19}{$M=(T,a)=$}
\end{tikzpicture}
\end{center}
In row $v_1$ there are nine cornersupport 0s and one non-cornersupport 0 weakly to the left of the marked 0, so $\magic_1=9+1=10$.
In row $v_2$ there are zero cornersupport 0s and five non-cornersupport 0s weakly to the left of the marked 0, so $\magic_2=0+5=5$.
In row $v_3$ there are five cornersupport 0s and two non-cornersupport 0s weakly to the left of the marked 0, so $\magic_3=5+2=7$.
In row $v_4$ there are zero cornersupport 0s and two non-cornersupport 0s weakly to the left of the marked 0, so $\magic_4=0+2=2$.
In row $v_5$ there are nine cornersupport 0s and one non-cornersupport 0 weakly to the left of the marked 0, so $\magic_5=9+1=10$.
In row $v_6$ there are zero cornersupport 0s and four non-cornersupport 0s weakly to the left of the marked 0, so $\magic_6=0+4=4$.

The lexicographically smallest permutation for which the sequence $(\magic_{\pi(1)},\ldots,\magic_{\pi(6)})$ is weakly increasing is
$\pi = (4,6,2,3,1,5)$.
We thus have the first segment of the labels $\ell$ of $D$ as $(\ell_1,\ldots,\ell_6) = (v_4,v_6,v_2,v_3,v_1,v_5)$ and $(\magic_{\pi(1)},\ldots,\magic_{\pi(6)}) = (2,4,5,7,10,10)$.
Also, the leftmost 1s in the rows of $D$ are at positions $T'_{12},T'_{24},T'_{35},T'_{47},T'_{5\,10},T'_{6\,10}$.

In column $v_7$ there are four cornersupport 1s and one non-cornersupport 1 weakly above the marked 1, so $\magic_7=4+1=5$.
Repeating this for the other columns we find $(\magic_7,\ldots,\magic_{19})=(5,1,7,6,7,4,1,3,2, 3  1, 1, 1)$.
The lexicographically smallest permutation for which the sequence $(\magic_{6+\sigma(1)},\ldots,\magic_{6+\sigma(13)})$ is weakly increasing
is $\sigma = (2,7,11,12,13,9,8,10,6,1,4,3,5)$.
The remaining labels are 
\begin{align*}
(\ell_7,\ldots,\ell_{19}) = &
(v_{6+2},v_{6+7},\ldots,v_{6+3},v_{6+5})\\ 
=& (v_8,v_{13},v_{17},v_{18},v_{19},v_{15},v_{14},v_{16},v_{12},v_{7},v_{10},v_{9},v_{11}).
\end{align*}
The highest 1s in each of the columns of $D$ are thus at positions:
\begin{align*}
&(T'_{\magic_{6+\sigma(1)},1} , T'_{\magic_{6+\sigma(2)},2},\ldots,T'_{\magic_{6+\sigma(13)},13} )\\
&= ( T'_{11},T'_{12},T'_{13},T'_{14},T'_{15},T'_{26},T'_{37},T'_{38},T'_{49},T'_{5\, 10},T'_{6\,11},T'_{7\,12},T'_{7\,13}).
\end{align*}
This gives the following diagram for `boundary 1s' of the labelled parallelogram polyomino:
\begin{center}
\begin{tikzpicture}
\EWdiagramlabwithin{11111~~~~~~~~,~1~~~1~~~~~~~,~~~1~~11~~~~~,~~~~1~~~1~~~~,~~~~~~1~~1~~~,~~~~~~~~~11~~,~~~~~~~~~1~11}{0,4,6,2,3,1,5}{8,13,17,18,19,15,14,16,12,7,10,9,11}{}
\end{tikzpicture}
\end{center}
Use these 1s that describe the boundary of the polyomino to complete the interior of the polyomino:
\begin{center}
\begin{tikzpicture}
\EWdiagramlabwithin{11111~~~~~~~~,~11111~~~~~~~,~~~11111~~~~~,~~~~11111~~~~,~~~~~~1111~~~,~~~~~~~~~11~~,~~~~~~~~~1111}{0,4,6,2,3,1,5}{8,13,17,18,19,15,14,16,12,7,10,9,11}{}
\end{tikzpicture}
\end{center}
Fill the remaining cells with 0s to yield the labelled parallelogram polyomino
\begin{center}
\begin{tikzpicture}
\EWdiagramlabwithin{1111100000000,0111110000000,0001111100000,0000111110000,0000001111000,0000000001100,0000000001111}{0,4,6,2,3,1,5}{8,13,17,18,19,15,14,16,12,7,10,9,11}{$\mydecpara(M)=$}
\end{tikzpicture}
\end{center}
\end{example}

\begin{remark}
The construction in Definition~\ref{map:two} is now helpful in understanding how Definition~\ref{myalg1} works.
We may consider an EW-tableau in $EW_{m,n}$ to be a marked EW-tableau wherein for every row (other than the first) the marked 0 in is the rightmost non-cornersupport 0 and for every column the marked 1 is the lowest non-cornersupport 1.
\end{remark}

\end{document}